\def\nref#1{(\ref{#1})}
\def\intv{$[\xi, \ \eta]$\,\,} 
\def\tla{\tilde \lambda} 
\def\tlu{\tilde u} 
\def\tstar{\hat T}
\newcommand{\qed}{\nobreak \ifvmode \relax \else
      \ifdim\lastskip<1.5em \hskip-\lastskip
      \hskip1.5em plus0em minus0.5em \fi \nobreak
      \vrule height0.75em width0.5em depth0.25em\fi}
\newcommand{\eq}[1]{\begin{equation}\label{#1}}
\newcommand{\en}{\end{equation}}
\def\inv{^{-1}}
\def\IR{\mathbb{R}}
\def\IC{\mathbb{C}}
\def\filtlan{\texttt{Filtlan}}
\def\tlu{\tilde u} 
\def\Sup#1{^{({#1})}}
\title{A Thick-Restart Lanczos algorithm with  polynomial filtering
for Hermitian eigenvalue problems}
\author{Ruipeng Li\thanks{Center  for Applied  Scientific Computing,
    Lawrence  Livermore National  Laboratory,  P. O.  Box 808,  L-561,
    Livermore,   CA  94551   {(\tt{li50@llnl.gov})}.  This   work  was
    performed under the  auspices of the U.S. Department  of Energy by
    Lawrence    Livermore   National    Laboratory   under    Contract
    DE-AC52-07NA27344.}
\and Yuanzhe Xi\thanks{Department of Computer Science \& Engineering,
        University of Minnesota, Twin Cities, MN 55455
({\tt \{saad,yxi\}@cs.umn.edu}). This work was supported by the 
  ``High-Performance Computing, and the Scientific Discovery through
  Advanced Computing (SciDAC) program'' funded by U.S. Department of
  Energy, Office of Science, Advanced Scientific Computing Research
  and Basic Energy Sciences DE-SC0008877.}
\and Eugene Vecharynski\thanks{Computational Research Division,
Lawrence Berkeley National Laboratory,
Berkeley, CA 94720 ({\tt \{evecharynski,cyang\}@lbl.gov}). Partial support for this work was provided through Scientific Discovery through Advanced Computing (SciDAC) program funded by U.S. Department of Energy, Office of Science, Advanced Scientific Computing Research.}
\and Chao Yang\footnotemark[3]
\and Yousef Saad\footnotemark[2]
}
\begin{document}

\maketitle

\begin{abstract}
Polynomial filtering can provide a highly effective means of computing
all eigenvalues of a real symmetric (or complex Hermitian) matrix that
are located in a given interval, anywhere in the spectrum.  This paper  describes 
a  technique for tackling this problem by combining  a Thick-Restart  version  of the  Lanczos
algorithm  with  deflation (`locking')  and  a new type of polynomial filters  
obtained from  a least-squares technique.  The resulting algorithm
can be  utilized in a  `spectrum-slicing' approach whereby a 
very large  number of eigenvalues  and associated eigenvectors  of the
matrix are computed by extracting eigenpairs located in different sub-intervals
independently from one another.
\end{abstract}

\begin{keywords}
Lanczos algorithm, polynomial filtering,
Thick-Restart, deflation, spectrum slicing, 
interior eigenvalue problems
\end{keywords}

\section{Introduction\label{sec:intro}}
The problem of computing a very large number of eigenvalues of a large sparse
real symmetric (or complex Hermitian) matrix 
is common to many applications in the physical sciences.
For example, it arises in the Density Functional Theory (DFT) 
based electronic structure calculations for large molecular systems or solids,
where the number of wanted eigenvalues can reach the order of tens of thousands or more.  
While a number of codes were developed in the past for solving large-scale eigenvalue 
problems \cite{Baglama:2003:AIM:838250.838257,Baker:2009:ASN:1527286.1527287,
Bollhofer2007951,Filtlan-paper, citeulike:1474529,
Hernandez:2005:SSF,Stathopoulos10,Yamazaki:2010:APS:1824801.1824805}, these
have not been designed specifically for  handling the situation when the number of targeted 
eigenpairs is extremely large and when the eigenvalues are located well inside the
spectrum. 
It is only in the last few years that this difficult problem has begun to be addressed
by algorithm developers~\cite{Filtlan-paper,Feast,SakSug03, Ve.Ya.Pa:15}. 

Given an $n \times n$ real symmetric  (or complex Hermitian) matrix $A$,
the   problem addressed  in  this  paper  is
to compute \emph{all} of  its eigenvalues that are
located in a  given interval $[\xi,\ \eta]$,
along with their associated eigenvectors. The given interval  should be
a  sub-interval  of  the  interval  $[\lambda_n,\  \lambda_1]$, where
$\lambda_n$ and $ \lambda_1$ 
are the smallest and largest eigenvalues of $A$, respectively.  
In this setting, two types of problems can be distinguished. The most common  situation
treated so far in the literature is the case when
the interval $[\xi,\ \eta]$ is located
at one end of the spectrum, i.e., the case  when either $\xi = \lambda_n$ 
or  $\eta = \lambda_1$. These are often termed extreme eigenvalue problems. 
Computing all eigenvalues in a given interval is typically 
not an issue in this case. Here most methods will work well. 
The second situation, when   $\lambda_n<\xi<\eta<\lambda_1$,  is harder to solve in
general and is often called an  `interior' eigenvalue problem.  

Being able to  efficiently compute the eigenvalues inside a given interval~$[\xi,\ \eta]$ 
constitutes  a critical ingredient in an approach known as  `spectrum slicing' 
for extracting  a large number of eigenpairs.  Spectrum slicing is 
a divide and conquer strategy in which eigenvalues located
in different sub-intervals are computed
 independently from one another. This is discussed in detail 
in the next section.
%
A common approach to 
obtain the part of spectrum in~$[\xi,\ \eta]$ 
is to apply the Lanczos algorithm or subspace iteration to a transformed matrix $B = \rho(A)$, 
where $\rho$ is either a rational function or a polynomial.

For  extreme intervals,  a standard  Chebyshev
acceleration within a subspace iteration  code is exploited  
in \cite{PARSEC-paper} in DFT
self-consistent field (SCF) calculations.  For interior intervals, the
best known  strategy is based on  the shift-and-invert transformation,
where the Lanczos  algorithm or subspace iteration is applied  to $B =
(A-\sigma I)\inv$,  with the shift  $\sigma$ selected to point  to the
eigenvalues in the wanted interval  (e.g., $\sigma$ can be selected as
the middle of the interval).  The shift-and-invert transformation maps
the eigenvalues of $A$ closest to $\sigma$ to the extreme ones of $B$.
This technique may  be effective in some situations but  it requires a
factorization of the matrix $A-\sigma  I$ which can be prohibitively 
expensive for
large  matrices produced from  3D  models.   In  contrast,  polynomial
filtering  essentially replaces  $(A-\sigma I)\inv  $ by  a polynomial
$\rho(A)$ such that all  eigenvalues of $A$ inside $ [  \xi,\ \eta ] $
are  transformed   into  dominant   eigenvalues  of   $\rho(A)$.   Our
experience  in   electronic  structure  calculations   indicates  that
polynomial  filtering  can  perform quite  well.  

The earlier  paper \cite{Filtlan-paper}  described a  filtered Lanczos
approach for  solving the same problem.  Though the basic idea  of the
present paper is  also based on a combination  of polynomial filtering
and  the   Lanczos  process,  the  two   approaches  have  fundamental
differences. First,
the polynomial filters used in \cite{Filtlan-paper} are
different from those of this paper. They are based on a two-stage
approach in which a spline function, called the base filter, is first selected
and then a polynomial is computed to approximate this base filter.
In the present paper, the filter is a simpler least-squares approximation
to the Dirac delta function with various forms of damping. 

The second difference is that the projection method used in  \cite{Filtlan-paper} is the Lanczos
algorithm with partial reorthogonalization \cite{simon1984lanczos} and no restart.
In contrast, the present paper
uses the Lanczos algorithm with a combination of explicit deflation (`locking') 
and implicit restart. A subspace iteration approach is also considered.
In essence, the projection methods used in this paper are geared toward
a limited memory implementation. The choice of the filters 
puts an emphasis on  simplicity as well as improved 
robustness relative to   \cite{Filtlan-paper}.

The paper is organized as follows. 
Section~\ref{sec:Slicing} provides a high-level description of the spectrum slicing strategy
for computing a large subset of eigenvalues,
which is the main motivation for this work. Section \ref{sec:filters} introduces a least-squares viewpoint for deriving polynomial filters used in the eigenvalue computation. Section \ref{sec:filtlan} discusses how to efficiently combine the restarted Lanczos algorithm with polynomial filtering and deflation. Numerical examples are provided in Section \ref{sec:num} and the paper ends with concluding remarks in Section \ref{sec:con}.

\section{Motivation: Spectrum slicing}\label{sec:Slicing}
The algorithms studied in this paper are part of a bigger project
to develop a parallel package named EVSL (Eigen-Value Slicing Library) for
 extracting very large numbers of eigenvalues and their associated
eigenvectors of a matrix. The basic methodology adopted in EVSL is 
a divide-and-conquer approach known as \emph{spectrum slicing.}
\subsection{Background}
The principle of spectrum slicing is 
conceptually  simple. It consists of dividing the overall interval containing the spectrum into small sub-intervals and then computing eigenpairs 
in each sub-interval  independently. 

For this to work, it is necessary to 
develop a procedure that is 
able to  extract all eigenvalues in a given arbitrary small interval.
Such a  procedure must satisfy two important requirements.
The first is that the eigenpairs
in each sub-interval under consideration 
are to be computed independently from any of the other sub-intervals. 
The procedure should be as oblivious as possible to any other calculations. The only 
possible exception is that we may have  instances where
checking for orthogonality between nearby  pairs will be warranted. 
The other requirement is that the procedure under consideration should not miss any eigenvalue.

The idea of spectrum slicing by polynomial filtering is illustrated
in Figure~\ref{fig:filters}.
In this approach, the spectrum is first linearly
transformed into the interval [-1, 1]. This transformation is necessary
because the polynomials are often expressed in Chebyshev bases.
The interval of interest is then split into $p$ sub-intervals ($p=3$ in the illustration). 
In each of the sub-intervals we select a filter polynomial of a certain
degree so that eigenvalues within the sub-interval are amplified. 
In the illustration shown in Figure~\ref{fig:filters}, the 
polynomials are of degree 20 (left), 30 (middle), and 32 (right).
 High intersection points of the curves delineate the final sub-intervals
used. 

\begin{figure}[tbh] 
\centering
\centerline{\includegraphics[width=0.66\textwidth]{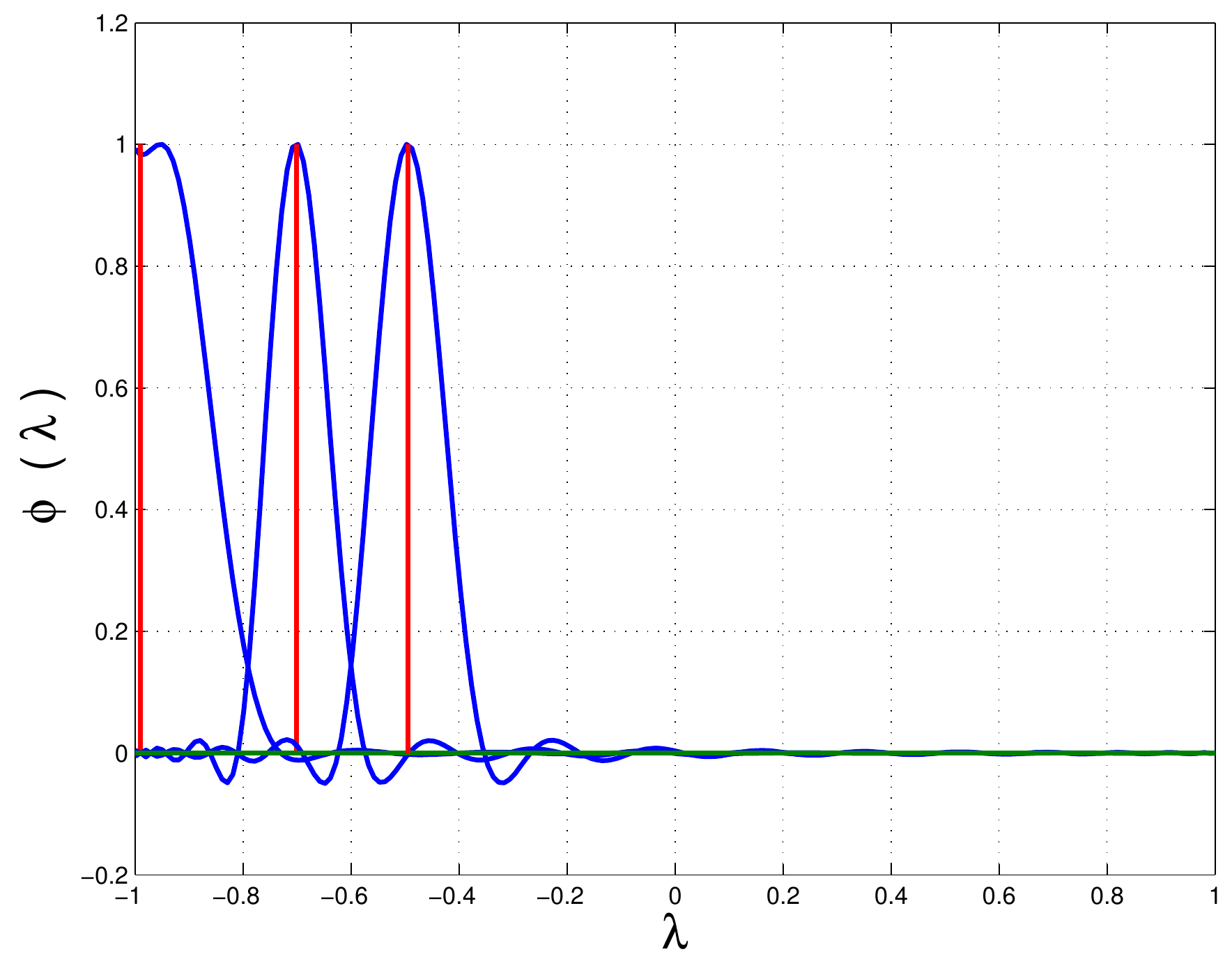}} 
\caption{Polynomial filters for 3 different slices.}\label{fig:filters}
\end{figure}


\subsection{Slicing strategies}
The simplest way to slice a target interval is to divide it uniformly
into several sub-intervals of equal size.  
However, when the distribution of eigenvalues is nonuniform, 
some intervals may contain many more eigenvalues than others. 
An alternative is required  
so that the cost of computing 
eigenvalues within each sub-interval is not excessive.

A  better slicing  strategy is  to divide  the interval  based on  the
distribution of  eigenvalues. This can  be done by exploiting algorithms 
for computing density of states (DOS)
\cite{LinYangSaad2013-TR}. 
We can use Newton's method or a form of
bisection to divide the interval in such a way that each
sub-interval  contains   roughly  the  same  number   of  eigenvalues.
However, when the eigenvalues are  not uniformly distributed, the size
of each partitioned sub-interval can  be quite different. As a result,
we  may  have  to  use  filter polynomials  with  varying  degrees  on
different slices.  One advantage of  this slicing strategy is  that we
can  ensure that  roughly the  same amount  of memory  is required  to
compute  eigenpairs  within  each  sub-interval.  This  is  especially
important for a parallel computational environment.

The optimal number of slices in which to partition the spectrum 
depends  on a number of factors such the 
efficiency  of matrix-vector  multiplications, the 
total number of eigenpairs to be computed, etc. In a parallel 
computing environment, it also depends on the number of processors available.

\subsection{Parallel strategies}
If a given interval contains many eigenpairs, we 
divide it into a number of sub-intervals and map
each sub-interval to a group of processors, so that eigenvalues
contained in different sub-intervals can be computed in parallel.
This strategy forms the basic configuration of parallel
 spectrum slicing and is illustrated in Figure \ref{fig:evsl}.
A division of the initial interval into sub-intervals containing 
 roughly the same number of eigenvalues will
prevent the orthogonalization and Rayleigh-Ritz procedure from becoming 
a bottleneck. It will also 
limit the amount of work (hence the granularity) associated with
each concurrent subtask.  When the number of sub-intervals is much larger
than the number of processor groups, dynamic scheduling should be used to 
map each sub-interval to a processor group and launch the computation 
for several sub-intervals in parallel. 
Parallelism across slices
constitutes only one of the levels of parallelism. Another level 
corresponds to the matrix-vector operations which can be treated with graph
partitioning.

\begin{figure}[tbh] 
\centering
\centerline{\includegraphics[width=0.66\textwidth]{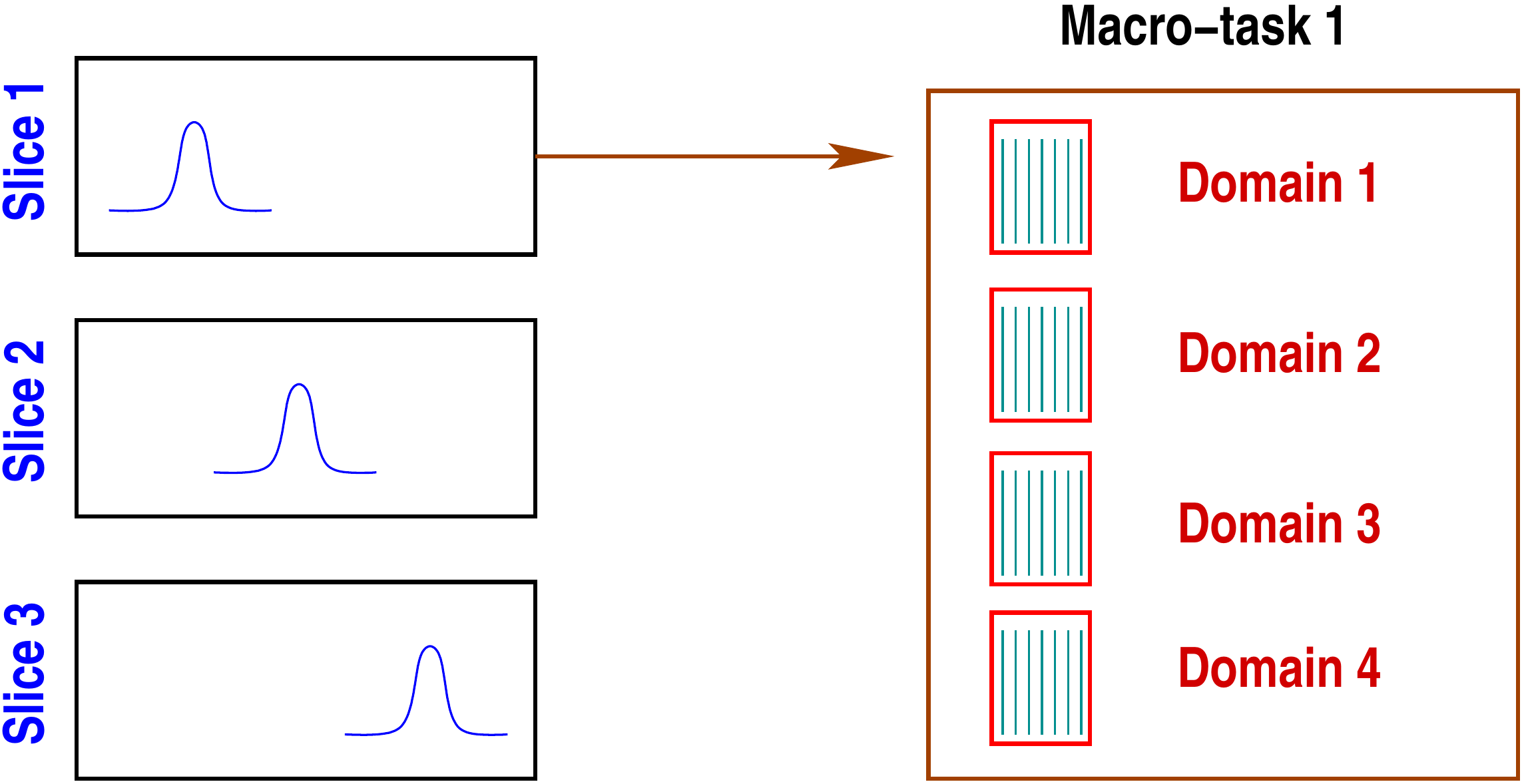}}
\caption{The two main levels of parallelism in \emph{EVSL}.}
\label{fig:evsl}
\end{figure}



\section{Least-squares polynomial filters}\label{sec:filters} 
The article  \cite{Filtlan-paper} relied on polynomial filters developed in
\cite{saad-FILT}. These polynomials are computed as least-squares approximations
to a base filter, typically a spline function. The base filter transforms 
eigenvalues in the desired interval to  values that are close to one and 
those outside the interval to values that are close to zero. The advantage of this
procedure is that it is very flexible since the base filter can be selected in
many different ways to satisfy any desired requirement. On the other hand,
the procedure is somewhat complex.
In EVSL we will only use one type of filter namely, Chebyshev polynomial approximation to the 
Dirac delta function. 

\subsection{Approximating the Dirac delta function}\label{sec:deltFun} 
Since Chebyshev  polynomials are  defined over the  reference interval
$[-1, \ 1]$, a linear transformation  is needed to map the eigenvalues
of a general Hermitian matrix $A$  to this reference interval. This is
achieved by the  following transformation: \eq{eq:mapping} \widehat{A}
=  \frac{A -  c I}{d}  \qquad \mbox{with}\quad  c =  \frac{\lambda_1 +
  \lambda_n}{2}, \quad  d =  \frac{\lambda_1 - \lambda_n}{2}.   \en In
other  words, all  work will  be performed  on the  transformed matrix
$\widehat{A}$ which now  has its eigenvalues in  the interval $[-1, \ 1]$.
In   practice,   the   maximum   ($\lambda_1$)   and   the
minimum~($\lambda_n$) eigenvalues of  $A$ can be replaced  by an upper
bound  $\tilde{\lambda}_1$  and   a  lower  bound  $\tilde{\lambda}_n$
obtained by adequate perturbations of  the largest and smallest eigenvalues 
obtained from a few steps  of  the  standard  Lanczos
algorithm~\cite{zstc:chebyshev06}.

Assume that  the matrix $A$ is  thus linearly transformed so  that its
eigenvalues are in $[-1, \ 1]$ and  let $\gamma$ be the value on which
the Dirac delta function is to be centered. Then, apart from a
scaling by a constant,  a formal expansion of
this  Dirac delta  function 
  \eq{eq:Cheb0} \rho_k(t)  = \sum_{j=0}^k
\mu_j T_j(t), \en with
\begin{eqnarray}
\label{eq:mu}
\mu_j &=& \left\{
\begin{array}{ll}
\frac{1}{2} & \mbox{if} \  j = 0\cr 
\cos(j \cos\inv (\gamma) ) & \mbox{otherwise}  
\end{array} \right.
\end{eqnarray}
where $T_j(t)$ is the Chebyshev polynomial of the first kind of degree $j$.
Consider the normalized sequence of Chebyshev polynomials:
\begin{eqnarray}
\label{eq:ncheby}
\tstar_j &=& \left\{
\begin{array}{ll}
T_j/\sqrt{\pi} & \mbox{if} \  j = 0\cr 
T_j/\sqrt{\pi/2} & \mbox{otherwise}  
\end{array} \right.
\end{eqnarray}
which are orthonormal in that 
$\langle \tstar_i \,, \tstar_j \rangle_w = \delta_{ij}$, 
where $\langle\cdot \,,\cdot\rangle_w$ represents the Chebyshev $L^2$ inner product, and $\delta_{ij}$ is the Kronecker delta function.
Then, the formal expansion of $\delta_\gamma$ is
$\delta_\gamma \approx \sum_{j=0}^k \hat \mu_j \hat T_j $, where
$\hat \mu_j = \langle \tstar_j \,, \delta_\gamma \rangle_w$.
Making the change of variables $t = \cos \theta $ and setting
$\theta_\gamma = \cos\inv (\gamma)$, we get
\[ 
\hat \mu_j = \sqrt{\frac{2 - \delta_{j0}}{\pi}} \
 \int_0^\pi \cos (j \theta) \delta_{\theta_\gamma} d \theta 
= \sqrt{\frac{2 - \delta_{j0}}{\pi}} \
 \cos ( j \theta_\gamma ) . \]
Thus, it can be seen that 
 $\sum \hat \mu_j \hat T_j(t)  = \frac{2}{\pi} \sum \mu_j T_j(t) $ 
with $\mu_j$ defined by \nref{eq:mu}.

The biggest attraction of the above expansion relative to the one used
in \cite{Filtlan-paper} is its simplicity.  It has no other parameters
than the degree  $k$ and the point $\gamma$ on  which the Dirac delta
function is centered.  It may be  argued that it is not mathematically
rigorous or permissible to expand the Dirac delta function, which is a
distribution, into orthogonal  polynomials. Fortunately, the resulting
polynomials obey an alternative  criterion. 

\begin{proposition} 
Let $\rho_k(t) $ be the Chebyshev expansion
defined by (\ref{eq:Cheb0})--(\ref{eq:mu})
and let  $\hat \rho_k(t)$ be the polynomial that minimizes
\eq{eq:ObjAlt}
\| r(t) \|_w
\en 
over all polynomials $r$ of degree $\le k$, such that $r(\gamma) = 1$,
where $\| . \|_w$ represents the Chebyshev $L^2$-norm.  Then 
$\hat \rho_k (t) = \rho_k(t) / \rho_k(\gamma)$.
\end{proposition}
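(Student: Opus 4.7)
The plan is to cast the constrained minimization as an ordinary least-squares problem in coordinates with respect to the orthonormal Chebyshev basis $\{\hat T_j\}_{j=0}^k$ defined in (\ref{eq:ncheby}). Any polynomial $r$ of degree $\le k$ can be written uniquely as $r = \sum_{j=0}^k a_j \hat T_j$, and by orthonormality $\|r\|_w^2 = \sum_{j=0}^k a_j^2$. The pointwise constraint $r(\gamma) = 1$ becomes the single linear equation $\sum_{j=0}^k a_j \hat T_j(\gamma) = 1$. So the problem reduces to minimizing $\sum a_j^2$ subject to one linear constraint in $\mathbb{R}^{k+1}$.

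This minimization is immediate from Cauchy--Schwarz: for any admissible $(a_j)$,
\[
1 \;=\; \Bigl(\sum_{j=0}^k a_j\,\hat T_j(\gamma)\Bigr)^{2} \;\le\; \Bigl(\sum_{j=0}^k a_j^{2}\Bigr)\Bigl(\sum_{j=0}^k \hat T_j(\gamma)^{2}\Bigr),
\]
with equality if and only if $a_j = \lambda\,\hat T_j(\gamma)$ for a common scalar $\lambda$, and the constraint then forces $\lambda = 1/\sum_j \hat T_j(\gamma)^2$. Hence the unique minimizer is
\[
\hat\rho_k(t) \;=\; \frac{\sum_{j=0}^k \hat T_j(\gamma)\,\hat T_j(t)}{\sum_{j=0}^k \hat T_j(\gamma)^{2}}.
\]
(Equivalently, one can derive the same formula via a single Lagrange multiplier.)

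It remains to identify this minimizer with $\rho_k/\rho_k(\gamma)$. The computation just above (\ref{eq:ObjAlt}) in the excerpt establishes that the Chebyshev coefficients satisfy $\hat\mu_j = \hat T_j(\gamma)$, so the numerator of $\hat\rho_k$ is exactly $\sum_j \hat\mu_j \hat T_j(t)$, which the same paragraph identifies as $(2/\pi)\,\rho_k(t)$; the denominator is its value at $t=\gamma$, namely $(2/\pi)\,\rho_k(\gamma)$. The common factor $2/\pi$ cancels, giving $\hat\rho_k(t) = \rho_k(t)/\rho_k(\gamma)$ as claimed. (In particular this implicitly asserts $\rho_k(\gamma)\neq 0$, which is clear since $\rho_k(\gamma) = \sum_j \hat T_j(\gamma)^2 \cdot \pi/2 > 0$.)

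I do not expect a serious obstacle: the only non-routine step is recognizing that the coefficients in the formal delta expansion, properly normalized, are precisely the values $\hat T_j(\gamma)$, which is exactly what makes the formal expansion coincide (after rescaling) with the unique minimum-norm interpolant at $\gamma$. The mildest bookkeeping issue is keeping track of the $\sqrt{\pi/2}$ versus $\sqrt{\pi}$ normalizations between $T_j$ and $\hat T_j$, and the $j=0$ exceptional case in (\ref{eq:mu}) and (\ref{eq:ncheby}); both are handled uniformly by the $\sqrt{(2-\delta_{j0})/\pi}$ factor already used in the excerpt.
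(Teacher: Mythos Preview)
Your proof is correct and follows the same overall structure as the paper's: obtain the kernel polynomial formula
\[
\hat\rho_k(t)=\frac{\sum_{j=0}^k \hat T_j(\gamma)\hat T_j(t)}{\sum_{j=0}^k \hat T_j(\gamma)^2},
\]
then identify it with $\rho_k/\rho_k(\gamma)$ via the relation $\hat\mu_j=\hat T_j(\gamma)$ and the identity $\sum_j \hat\mu_j\hat T_j=(2/\pi)\rho_k$. The only difference is that the paper quotes the kernel formula as a known result from the literature, whereas you supply a short self-contained derivation by passing to orthonormal coordinates and applying Cauchy--Schwarz (equivalently, a one-constraint Lagrange multiplier). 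Your route is slightly more elementary and avoids an external citation; the paper's is terser. Substantively they are the same argument.
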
 
\begin{proof}
It is known  \cite{poly} that 
the unique minimizer of (\ref{eq:ObjAlt}) can be expressed via the 
Kernel polynomial formula: 
\eq{eq:KernP}
\hat{\rho}_k(t)\equiv \frac{\sum_{j=0}^{k}\tstar_j(\gamma)\tstar_j(t)}{\sum_{j=0}^{k}\tstar_j^2(\gamma)} .
\en
From the expression of   $\tstar_j$ in terms of $T_j$ in 
 \nref{eq:ncheby}, the above equation yields
\[
\hat{\rho}_k(t) = \frac{T_0(\gamma)T_0(t)+\sum_{j=1}^{k}2T_j(\gamma){T}_j(t)}{T^2_0(\gamma)+\sum_{j=1}^{k}2{T}_j^2(\gamma)} = \frac{T_0(t)/2+\sum_{j=1}^{k}T_j(\gamma){T}_j(t)}{T^2_0(\gamma)/2+\sum_{j=1}^{k}{T}_j^2(\gamma)}. 
\]
The numerator is equal to $\rho_k$ defined by (\ref{eq:Cheb0})--(\ref{eq:mu})  
and so the polynomials $\rho_k$  and $\hat \rho_k$ are  multiples of 
one another.
\end{proof}

Least-squares polynomials of this type have been advocated  and studied in the
context of polynomial preconditioning, where $\gamma = 0$;
see \cite[\S 12.3.3]{saad:iter03} and \cite{Saad-pol}.
In this case the polynomial $\hat{\rho}_k(t)$ is 
the `residual polynomial' used for  preconditioning.
A number of results have been established for the special case
$\gamma = 0$ in \cite{Saad-pol}. Here we wish to consider
additional results for the general situation where $\gamma \ne 0$. 


The starting point is the alternative expression \nref{eq:KernP} of the
normalized filter polynomial $\hat \rho_k(t)$. Next, we state  a corollary of 
the well-known Christoffel-Darboux formula on kernel polynomials shown
in \cite[Corollary 10.1.7]{Davis-book}. When written for
Chebyshev polynomials \nref{eq:ncheby} of the first kind, this corollary 
gives the following result.
\begin{lemma} \label{lem:lemCB}
Let $\hat T_j$, $j=0,1,\ldots$ be the orthonormal Chebyshev polynomials of
the first kind defined in~(\ref{eq:ncheby}). Then
\eq{eq:lemCB}
\sum_{j=0}^m  [\hat T_j (t)]^2  = 
\frac{1}{2} \left[
\hat T_{m+1}' (t) \hat T_m (t) - \hat T_{m}' (t) \hat T_{m+1} (t)
\right] . 
\en
\end{lemma}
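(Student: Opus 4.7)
The plan is to recognize the identity as the confluent (diagonal) form of the classical Christoffel-Darboux formula, specialized to the orthonormal Chebyshev basis (\ref{eq:ncheby}).

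First, I would recall the general Christoffel-Darboux identity for any family $\{p_j\}$ of polynomials orthonormal on an interval with respect to a positive weight: letting $a_j$ denote the leading coefficient of $p_j$,
\[
\sum_{j=0}^m p_j(x)\,p_j(y) \;=\; \frac{a_m}{a_{m+1}} \cdot \frac{p_{m+1}(x)\,p_m(y) - p_m(x)\,p_{m+1}(y)}{x-y}.
\]
Sending $y \to x$ and applying L'Hôpital's rule to the right-hand side collapses it to the confluent form
\[
\sum_{j=0}^m [p_j(x)]^2 \;=\; \frac{a_m}{a_{m+1}}\bigl[\,p_{m+1}'(x)\,p_m(x) - p_m'(x)\,p_{m+1}(x)\,\bigr],
\]
which is exactly the content of Davis's Corollary 10.1.7 cited in the text.

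The only remaining step is to evaluate the prefactor $a_m/a_{m+1}$ for the normalization (\ref{eq:ncheby}). Because $T_j(t)$ has leading coefficient $2^{j-1}$ for $j \ge 1$, the rescaled polynomial $\hat T_j = \sqrt{2/\pi}\,T_j$ has leading coefficient $a_j = 2^{j-1}\sqrt{2/\pi}$, so $a_m/a_{m+1} = 1/2$ for every $m \ge 1$. Substituting this into the confluent identity reproduces the factor of $1/2$ in front of the bracket in (\ref{eq:lemCB}).

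There is no substantive obstacle in this argument: the lemma is a one-line specialization of a classical fact once the leading-coefficient ratio has been tracked. The only point demanding care is that $\hat T_0 = T_0/\sqrt{\pi}$ uses a different normalizing constant from $\hat T_j$ with $j \ge 1$, which would in principle alter the boundary case $m=0$; since the lemma is applied later only for $m \ge 1$ (the degree of the filter polynomial is much larger than $1$), this detail may be passed over.
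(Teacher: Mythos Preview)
Your proposal is correct and matches the paper's own treatment: the paper does not give an independent proof of the lemma but simply states it as the specialization to the orthonormal Chebyshev basis~(\ref{eq:ncheby}) of the confluent Christoffel--Darboux identity, citing Corollary~10.1.7 of Davis exactly as you do. Your explicit computation of the leading-coefficient ratio $a_m/a_{m+1}=1/2$ and your remark that the $m=0$ case is anomalous (and irrelevant, since Theorem~\ref{th:NormCB} assumes $k\ge 1$) actually go a step beyond what the paper spells out.
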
 
This will allow us to analyze the integral of
$\hat \rho_k^2$ with respect to the
Chebyshev weight. 
\begin{theorem} \label{th:NormCB} Assuming $k\ge 1$, the following equalities
hold: 
\begin{eqnarray}
\int_{-1}^1 \frac{[\hat \rho_k(s)]^2}{\sqrt{1-s^2}} ds &=& 
\frac{1}{\sum_{j=0}^k  [\hat T_j (\gamma)]^2} \label{eq:thNorm} \\
&=& 
\frac{2 \pi}{(2k+1)}  \times  
\frac{1}{1 + \frac{\sin (2k+1) \theta_\gamma }{ (2 k+1) \sin  \theta_\gamma }} 
\ ,
\label{eq:thNorm1}
\end{eqnarray}
where  $\theta_\gamma = \cos^{-1} \gamma$.
\end{theorem}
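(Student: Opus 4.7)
The plan is to prove the two equalities separately. For the first equality \nref{eq:thNorm}, I would start from the kernel polynomial representation~\nref{eq:KernP} of $\hat\rho_k$, namely
\[
\hat\rho_k(t) \;=\; \frac{1}{S_k(\gamma)}\sum_{j=0}^k \hat T_j(\gamma)\hat T_j(t), \qquad S_k(\gamma) := \sum_{j=0}^k [\hat T_j(\gamma)]^2 .
\]
Squaring, integrating against the Chebyshev weight, and expanding the double sum gives
\[
\|\hat\rho_k\|_w^2 \;=\; \frac{1}{S_k(\gamma)^2}\sum_{i,j=0}^k \hat T_i(\gamma)\hat T_j(\gamma)\langle\hat T_i,\hat T_j\rangle_w .
\]
By the orthonormality $\langle\hat T_i,\hat T_j\rangle_w = \delta_{ij}$ only the diagonal terms survive, yielding $\|\hat\rho_k\|_w^2 = S_k(\gamma)/S_k(\gamma)^2 = 1/S_k(\gamma)$, which is exactly \nref{eq:thNorm}.

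For the second equality \nref{eq:thNorm1}, the task reduces to evaluating $S_k(\gamma)$ in closed form. I would use Lemma~\ref{lem:lemCB} with $t=\gamma$:
\[
S_k(\gamma) \;=\; \tfrac{1}{2}\bigl[\hat T_{k+1}'(\gamma)\hat T_k(\gamma) - \hat T_k'(\gamma)\hat T_{k+1}(\gamma)\bigr].
\]
Converting to unnormalized Chebyshev polynomials via \nref{eq:ncheby} replaces the factor $\tfrac12$ by $1/\pi$ (for $j\ge 1$ the normalization is $\sqrt{\pi/2}$, so each squared factor contributes $2/\pi$). Then, setting $\gamma=\cos\theta_\gamma$ and using the standard identities $T_j(\cos\theta)=\cos(j\theta)$ and $T_j'(\cos\theta)=j\sin(j\theta)/\sin\theta$, the bracket becomes
\[
\frac{1}{\sin\theta_\gamma}\bigl[(k+1)\sin((k+1)\theta_\gamma)\cos(k\theta_\gamma) - k\sin(k\theta_\gamma)\cos((k+1)\theta_\gamma)\bigr].
\]

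The key trigonometric simplification is to split this as $k\bigl[\sin((k+1)\theta_\gamma)\cos(k\theta_\gamma)-\sin(k\theta_\gamma)\cos((k+1)\theta_\gamma)\bigr] + \sin((k+1)\theta_\gamma)\cos(k\theta_\gamma)$. The first bracket collapses to $\sin\theta_\gamma$ by the sine subtraction formula, while the product $\sin((k+1)\theta_\gamma)\cos(k\theta_\gamma)$ converts via product-to-sum to $\tfrac12[\sin((2k+1)\theta_\gamma)+\sin\theta_\gamma]$. Assembling the pieces gives
\[
S_k(\gamma) \;=\; \frac{2k+1}{2\pi}\left[1 + \frac{\sin((2k+1)\theta_\gamma)}{(2k+1)\sin\theta_\gamma}\right],
\]
and taking the reciprocal yields \nref{eq:thNorm1}.

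The main obstacle is really a bookkeeping one: keeping straight the normalization factors between $T_j$ and $\hat T_j$ when applying the Christoffel--Darboux lemma (in particular that the $j=0$ term uses $1/\sqrt\pi$ rather than $1/\sqrt{\pi/2}$, though this does not affect the derivative terms since $\hat T_0'\equiv 0$). An alternative route that sidesteps derivatives altogether is to expand $S_k(\gamma)$ directly as $\frac{1}{\pi}\bigl[1 + 2\sum_{j=1}^k \cos^2(j\theta_\gamma)\bigr]$, apply $\cos^2\phi=(1+\cos 2\phi)/2$, and use the Dirichlet-kernel identity $1 + 2\sum_{j=1}^k \cos(2j\theta_\gamma) = \sin((2k+1)\theta_\gamma)/\sin\theta_\gamma$; this gives the same closed form in a few lines and is the route I would likely take in a clean write-up.
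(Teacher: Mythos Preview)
Your proposal is correct and your primary route---kernel representation plus orthonormality for \nref{eq:thNorm}, then the Christoffel--Darboux lemma with the same trigonometric splitting $(k+1)A - kB = k(A-B) + A$ for \nref{eq:thNorm1}---is exactly the argument the paper gives. Your alternative route via $\cos^2\phi = (1+\cos 2\phi)/2$ and the Dirichlet-kernel identity is not in the paper; it is indeed shorter and bypasses Lemma~\ref{lem:lemCB} entirely, though the paper's use of Christoffel--Darboux has the minor advantage of making the connection to general orthogonal-polynomial theory explicit.
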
 
\begin{proof}
Let $D_k \equiv \sum_{j=0}^k  [\hat T_j (\gamma)]^2 $.
Since the sequence of polynomials $\hat T_j$ is orthonormal, \nref{eq:KernP}
implies that 
\[\| \hat \rho_k \|_w^2 = \left[\frac{1}{D_k}\right]^2 
\sum_{j=0}^k [\hat T_j (\gamma)^2 ] = \frac{1}{D_k} . 
\]
This shows \nref{eq:thNorm}. 
We now invoke  Lemma~\ref{lem:lemCB} to evaluate $D_k$. 
Recall that 
$T_j'(t) =  j \sin(j \theta)/\sin(\theta)$ where $\cos (\theta) = t$.
Then we obtain,  for any $t = \cos \theta $: 
\begin{eqnarray*}
\sum_{j=0}^k [\hat T_j (t)]^2 & = & 
\frac{1}{2} 
\left[
\hat T_{k+1}' (t) \hat T_k (t) 
- \hat T_{k}' (t) \hat T_{k+1} (t)
\right] \\
& = & \frac{1}{\pi} \frac{ (k+1) \sin((k+1)\theta) \cos (k \theta) - 
k\sin(k \theta) \cos( (k+1) \theta) }
{\sin \theta} \\
& = & \frac{k}{\pi} +
 \frac{1}{\pi} 
 \frac{ \sin((k+1)\theta) \cos(k \theta)}{\sin \theta} \\
& = & \frac{k}{\pi} +
 \frac{1}{\pi} 
 \frac{ \sin((k+1)\theta + k \theta) +  \sin((k+1)\theta - k \theta) }
{2 \sin \theta} \\
& = & \frac{2k+1}{2 \pi} +
 \frac{1}{\pi} 
 \frac{ \sin((2 k+1)\theta )}{2 \sin \theta} . 
\end{eqnarray*}
This leads to \nref{eq:thNorm1}, by setting 
 $\theta_\gamma = \cos^{-1} \gamma$, 
 and factoring  the term $2\pi / (2 k +1)$.
\end{proof}

The term $ \sin ((2k+1)\theta) / ((2k+1)\sin \theta)$ 
in the denominator of \nref{eq:thNorm1} 
is related to the Dirichlet Kernel \cite{Davis-book}. It is 
a Chebyshev polynomial  of the second kind of degree $2k$ normalized
so that its maximum value is equal to one. Recall that for these 
polynomials, which are often denoted by $U_m$,
we have $U_m(1) = m+1$ and $U_m(-1) = (-1)^m (m+1)$
and these are the points of largest magnitude of $U_m$. 
Hence $ \sin ((2k+1)\theta) / ((2k+1)\sin \theta) = U_{2k} (t)/(2k+1) \le 1$,
with the maximum value of  one reached.
Just as  important is the behavior of the minimum value which seems 
to  follow a pattern 
 similar to that of other Gibbs oscillation phenomena observed.
Specifically, the minimum value seems to converge 
to the value -0.217... as the degree increases. 
Three plots are shown in Figure~\ref{fig:modDirc} for illustration. 
\begin{figure}[tbh] 
\centering
\centerline{\includegraphics[width=0.56\textwidth]{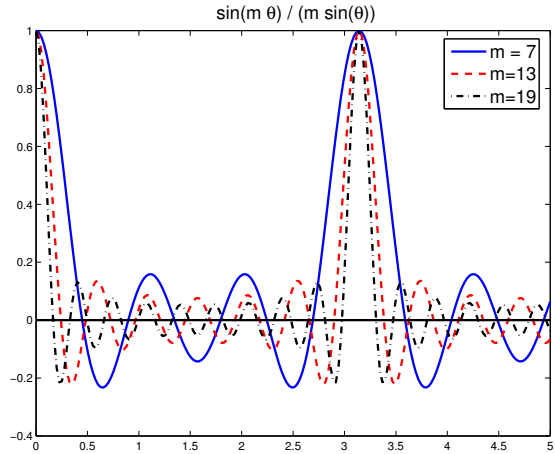}} 
\caption{The function $\sin m\theta/ (m \sin \theta) $ for
$m = 7, 13, 19$}\label{fig:modDirc}
\end{figure}

The above analysis shows that  the integral in \nref{eq:thNorm} decreases 
roughly like $2\pi/(2k+1)$. It can help understand how  the function is 
expected to converge.  
For example, in Section~\ref{sec:degSelect} we will 
exploit the fact that at the (fixed) 
boundaries $\xi, \ \eta$ of the interval the
polynomial value is bound to decrease as the degree increases,  otherwise the 
integral would remain higher than a certain value which would lead to 
a contradiction. 

\subsection{Oscillations and damping}
As is well-known,  expansions of discontinuous functions lead to 
oscillations near the discontinuities  known as \emph{Gibbs oscillations}. 
To alleviate   this behavior it is customary to add damping multipliers 
so that \nref{eq:Cheb0} is actually replaced by
\eq{eq:polPJ} 
\rho_{k} ( t) = \sum_{j=0}^k g_j^{k} \mu_j T_j(t) .  
\en 
Thus, the original expansion coefficients
$\mu_j$ in the expansion \nref{eq:Cheb0}
 are multiplied by smoothing factors $g_j^k$.  These
tend to be quite small  for the  larger values of $j$ that correspond to
 the highly oscillatory terms in the  expansion.
Jackson smoothing, see, e.g., ~\cite{rivlin:approx10,Jay-al},
is the best known  approach. The corresponding  coefficients $g_j^k$ are given 
by the formula
\[
g_j^k=\frac{\sin((j+1)\alpha_k)}{(k+2)\sin(\alpha_k)}+\left(1-\frac{j+1}{k+2}
\right)\cos(j\alpha_k),
\]
where $\alpha_k=\frac{\pi}{k+2}$. More details on this expression
can be seen in \cite{Jay-al}. Not as well known is another
form of smoothing proposed by Lanczos~\cite[Chap. 4]{Lanczos-book} and
referred to as $\sigma$-smoothing. It 
uses the following  simpler damping coefficients instead of $g_j^k$, called 
 $\sigma$ factors by the author:
\[
\sigma_0^k =1; \quad
\sigma_j^k= \frac{ \sin ( j \theta_k ) }{j \theta_k},
j=1,\ldots,k;  \quad \mbox{with} \quad   
\theta_k =\frac{\pi}{k+1} . 
\] 
Figure~\ref{fig:threeFilt} shows an illustration of three filters of degree 20,
one of which is without damping and the others using the Jackson damping and 
the Lanczos $\sigma$-damping, respectively. Scaling is used so that all $3$ polynomials take the same value $1$ at $\gamma$.

\begin{figure}[tbh]  
\centering
\centerline{\includegraphics[width=0.56\textwidth]{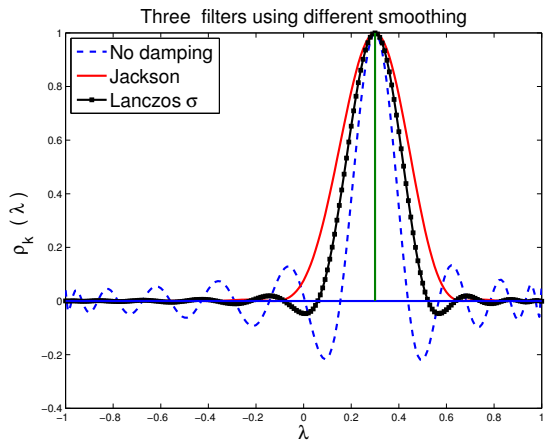}}
\caption{Polynomial filters of degree 20
 using 3 different damping techniques.}\label{fig:threeFilt}
\end{figure}

\subsection{Choosing the degree of the filter}\label{sec:degSelect}
A good procedure based on polynomial filtering  should begin by selecting the
polynomial and this cannot be left to the user. It must be done  automatically, 
requiring only  the sub-interval $[\xi,\ \eta]$ as input, 
and optionally the type of damping to be used. The procedure we currently use
starts with a low degree polynomial (e.g., $k=3$) and then  increases $k$ until
the values of $\rho_k(\xi)$ and $\rho_k(\eta)$ both fall below a certain threshold $\phi$
(e.g., $\phi=0.6$ for mid-interval filters, $\phi=0.3$ for end intervals). For example,
in Figure~\ref{fig:findDeg}, on the left side we would get a degree of
$k=15$ when $\phi=0.3$ and on the right side $k=20$ when $\phi=0.6$.
Once the degree has been selected, a post-processing is carried out to try
to get a `balanced polynomial', i.e., one  whose values at $\xi$ and $\eta$ are the same.
This is discussed next.

\begin{figure}[tbh] 
\centering
\includegraphics[width=0.46\textwidth]{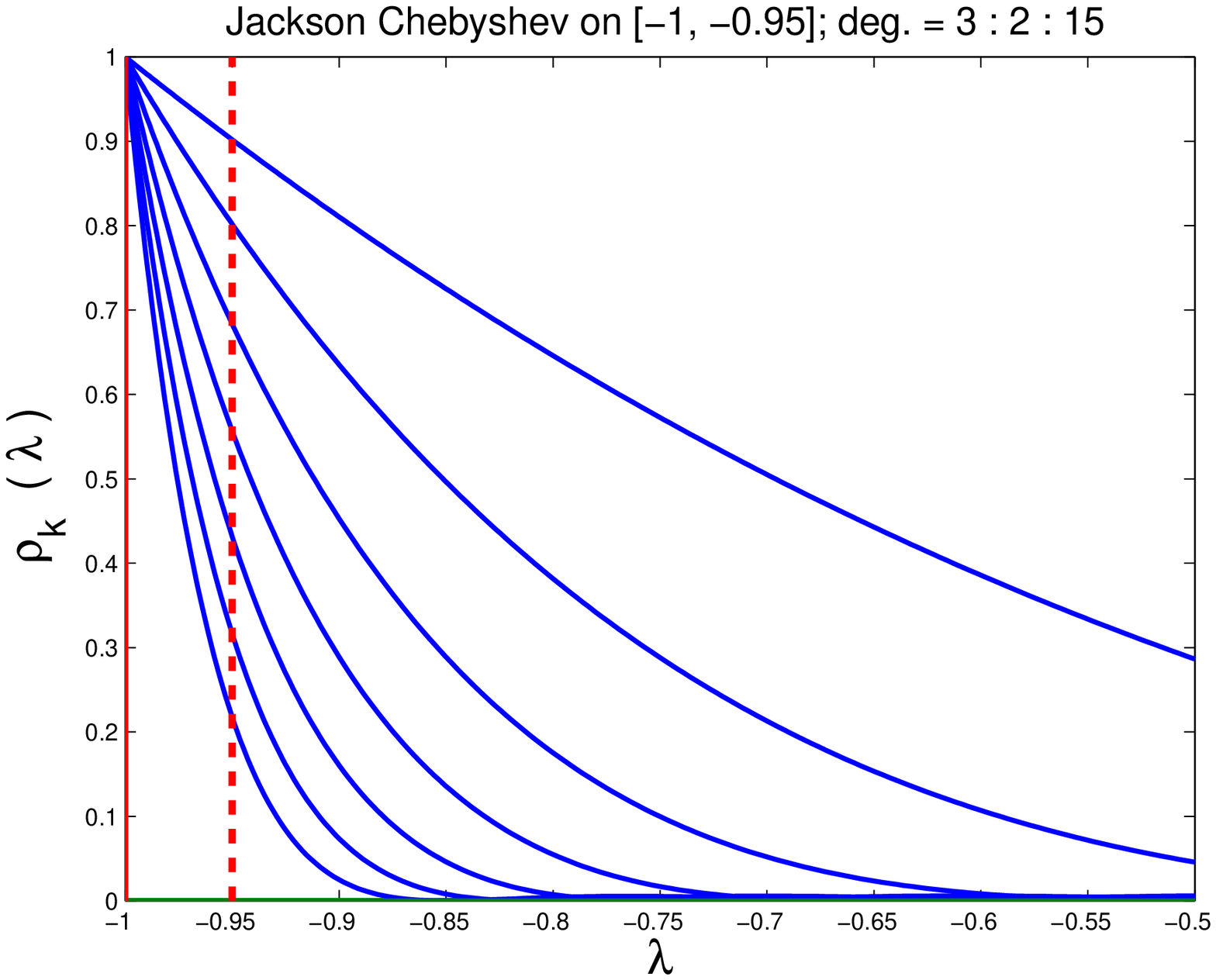}
\includegraphics[width=0.46\textwidth]{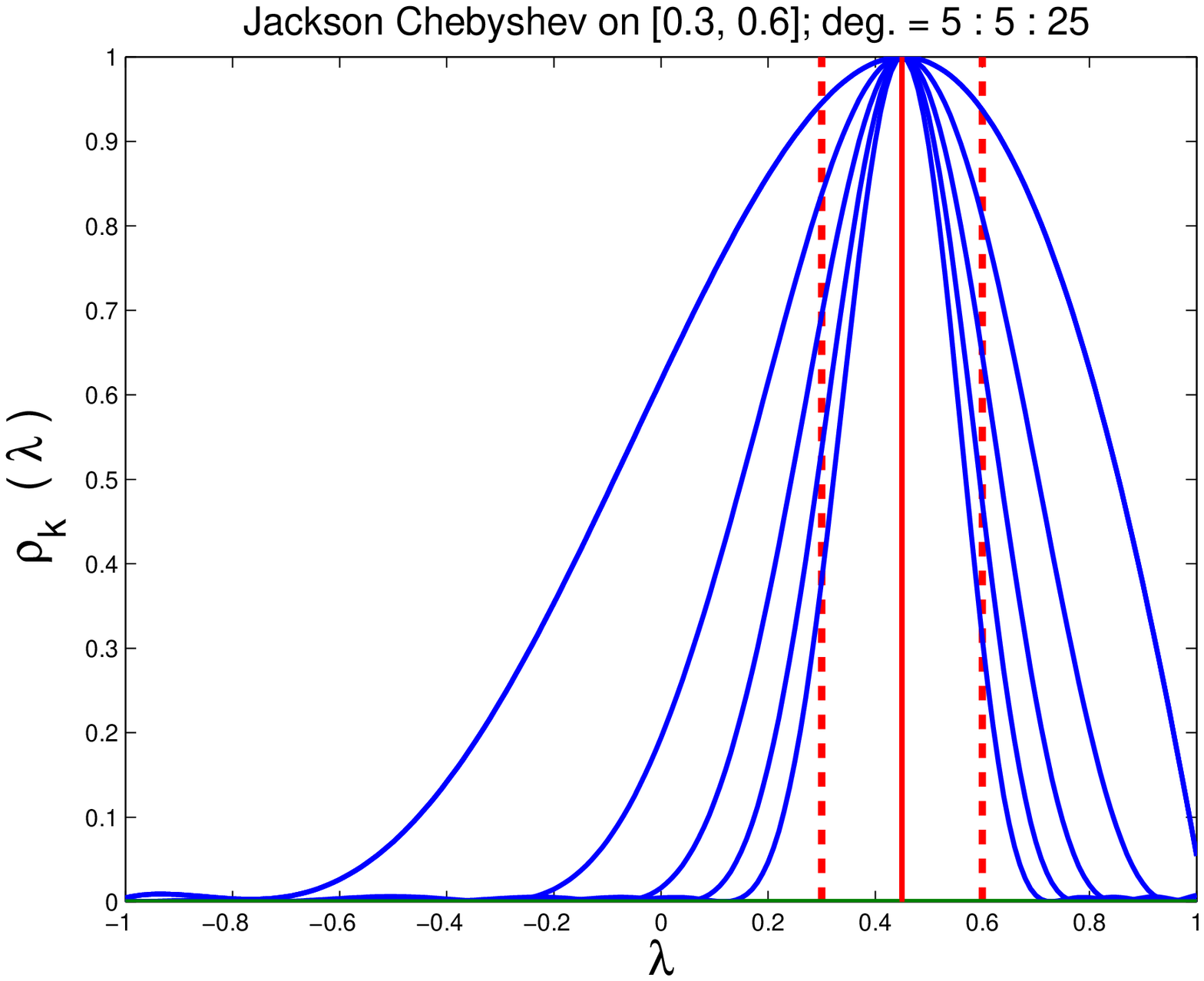}
\caption{Jackson-Chebyshev polynomial filters for a
left-end interval (left) and a middle interval (right).}
\label{fig:findDeg}
\end{figure} 


\subsection{Balancing the filter} 

It is preferable  to have a filter $\rho_k$ whose  values at $\xi$ and
$\eta$ are the same to facilitate the selection of desired eigenvalues
during  the Lanczos  procedure.  For this, a process is
required in which the  center  $\gamma$  of  the Delta  function  is
moved.  This is not a costly process but   its  effects  are
important. For example, it now becomes easy to determine if a computed
eigenvalue  $\theta=\rho_k(\lambda)$  corresponds   to  an  eigenvalue
$\lambda$ that is inside or outside the interval $[\xi, \eta]$.
If  $\phi \equiv \rho_k{(\xi)} = \rho_k{(\eta})$ then: 
\[ \lambda \ \in \ [\xi, \eta] \quad \mbox{iff}\quad \theta\equiv \rho_k(\lambda) \ge \phi . 
\]
Thus, to  find all eigenvalues $  \lambda \ \in  \ [\xi, \ \eta]  $ it
suffices  to find  all eigenvalues  of $\rho_k(\widehat{A})$  that are
greater than or equal to $\phi$.  In the actual algorithm, this
serves as a preselection tool only.  All eigenvalues $\theta_j$'s that
are  above  the  threshold  $\phi$ are  preselected.  Then  the
corresponding  eigenvectors  $\tilde   u_j$'s are computed along with the  
Rayleigh quotients  $\tilde u_j^H  A \tilde  u_j$, which will be  
ignored  if  they do  not  belong  to  $[\xi, \eta]$.    
Additional  details   will   be   given  in the full algorithm described 
in  Section \ref{sec:all}.

\begin{figure}[htb]  
\centering
\includegraphics[width=0.46\textwidth]{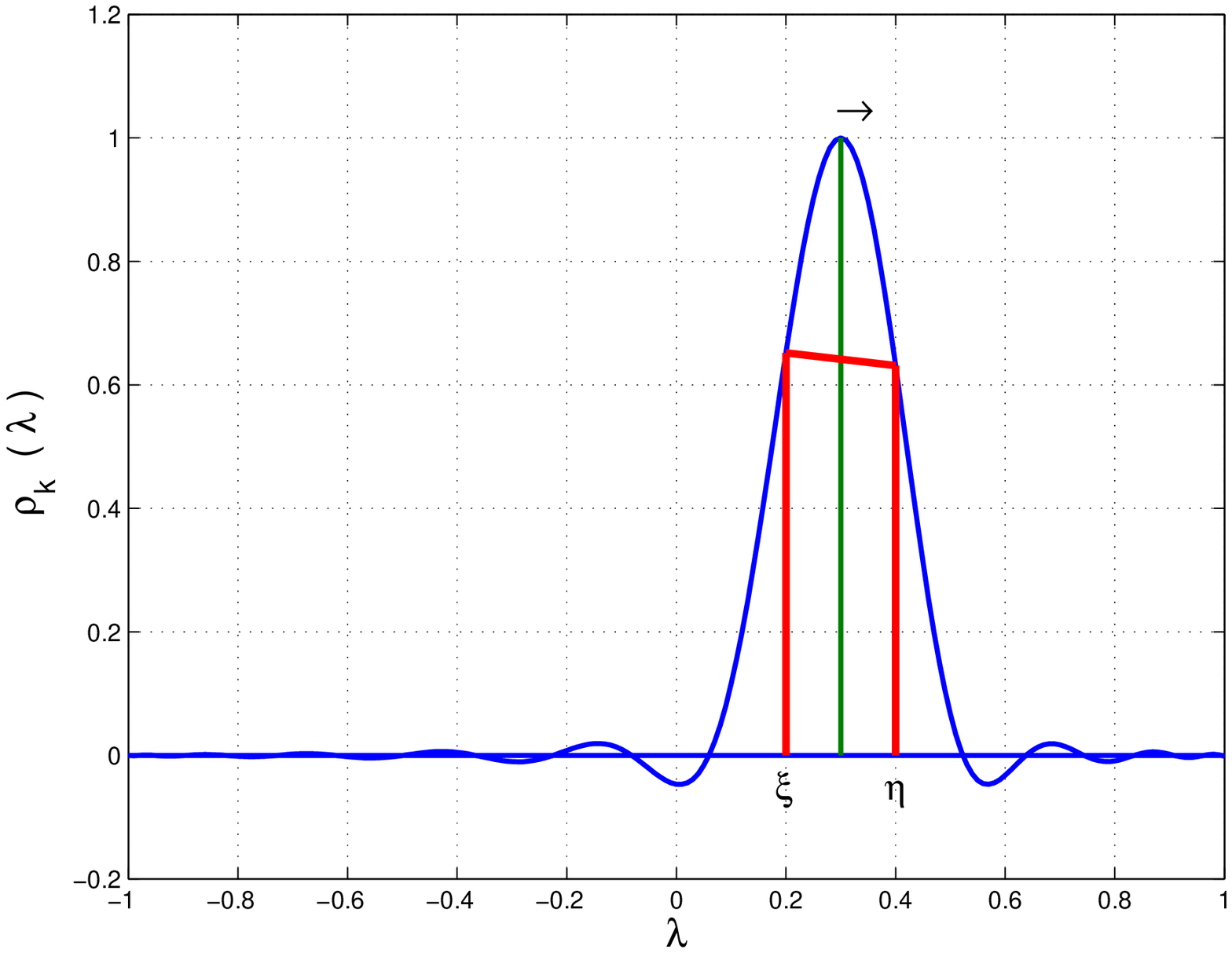}
\includegraphics[width=0.46\textwidth]{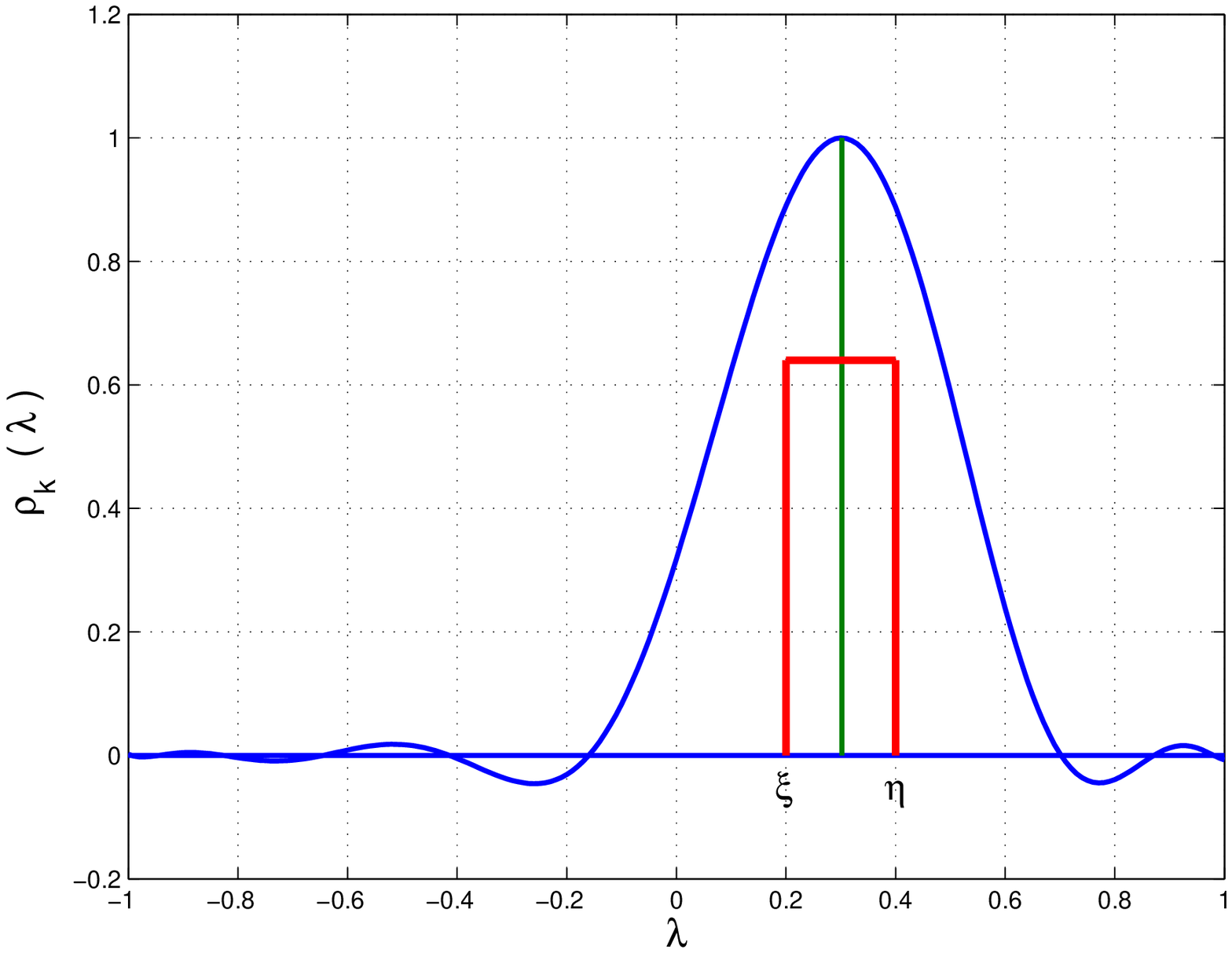}
\caption{Balancing the polynomial filter. Here the center
$\gamma$ of the Dirac delta  function 
is moved slightly to the right to yield a balanced 
polynomial.}\label{fig:onefilt}
\end{figure}

To adjust the center $\gamma$ so that $\rho_k(\xi) =  \rho_k(\eta)$, it is important to use the variable $\theta = \cos \inv t$ which plays a prominent role in the definition of
 Chebyshev polynomials.
We will denote by $\theta_x $ the angle $\theta_x = \cos^{-1}( x) $.
Thus $\cos(\theta_\xi ) = \xi$, $\cos(\theta_\eta ) = \eta$, 
$\cos(\theta_\gamma ) = \gamma$, etc.
We then apply Newton's method to solve the equation 
\eq{eq:Neq} 
 \rho_k (\xi) - \rho_k (\eta) = 0,
\en 
with respect to $\theta_\gamma$, through the coefficients $\mu_j$  
(see \nref{eq:polPJ} and \nref{eq:mu}).
The polynomial $\rho_k$ 
can be written in terms of the variable $\theta = \cos\inv (t) $ as
\[
\rho_{k} (\cos  \theta ) = \sum_{j=0}^k g_j^{k} \cos(j \theta_\gamma)
\cos(j\theta). \] Note that the first damping coefficient $g_0^{k}$
is multiplied by 1/2 to simplify notation, so that the first term with $j=0$ in the
expansion  is  not $1$
but  rather
$1/2$.   
In this way \nref{eq:Neq}    becomes    
\eq{eq:Neq2}   f(\theta_\gamma)    \equiv
\sum_{j=0}^k  g_j^{k}  \cos(j  \theta_\gamma)  [\cos(j\theta_\xi)-
  \cos(j\theta_\eta)] = 0  .  \en 

In order to solve this equation
by Newton's method, we need the derivative of $f$
with respect to $\theta_\gamma$ which is readily computable:
\[
f'(\theta_\gamma) = - 
\sum_{j=0}^k  g_j^{k}  j  \sin(j  \theta_\gamma)  [\cos(j\theta_\xi)-
  \cos(j\theta_\eta)]. \] 
Furthermore, as it turns out, 
the mid-angle   
\eq{eq:initGam} 
\theta_c = \frac{1}{2}(\theta_\xi + \theta_\eta)
\en 
already  provides a good initial guess for a balanced filter, though 
the values at the boundaries differ slightly as is shown in Figure~\ref{fig:onefilt}.  
In fact, the initial value $\theta_c$ in \nref{eq:initGam}  
is good enough to yield convergence of  the Newton iteration in 1 or 2
steps  in many  cases.  However,  there  may be  difficulties for  low
degree polynomials. Thus, if the  Newton's scheme fails to converge in
$2$ steps,  we compute the roots
of \nref{eq:Neq2} exactly via an eigenvalue problem, see Appendix \ref{appdix:1}.

\section{The Thick-Restart filtered Lanczos algorithm with deflation}\label{sec:filtlan} 
We now discuss how to efficiently combine a Thick-Restart (TR) version
of the  Lanczos algorithm with the polynomial  filtering and deflation
to compute all  the eigenpairs inside an arbitrary  interval. Since we
will apply  the Lanczos algorithm to  $\rho_k(\widehat{A})$ instead of
$A$, the following discussion will  deal with a given Hermitian matrix
denoted  by  $B$,  to  remind  the  reader  of  the  use  of  spectral
transformations  in  the   actual  computation.  A  filtered  subspace
iteration scheme is also discussed.
\subsection{The  Lanczos algorithm}
\label{sec:lan}
The Lanczos algorithm builds an orthonormal basis of the Krylov subspace
\[
\mathcal{K}_m = \mathrm{span} \{ q_1, B q_1, \ldots, B^{m-1} q_1 \}  \]
by a Gram-Schmidt process in which, at step $j$, the vector 
$Bq_j$  is  orthogonalized against $q_j$ and (when $j>1$) against $q_{j-1}$.
In the Lanczos algorithm the matrix $B$ is needed only 
 in the form of matrix-vector products, which may be very economical when
$B$ is sparse. In addition, no costly pre-processing is required as is the case for
codes based on shift-and-invert \cite{NASTRAN} or rational filtering \cite{Stefan,Feast,SakSug03,SakTad07}.

The sequence of vectors computed in  the course of the Lanczos algorithm
satisfies the 3-term recurrence: 
\begin{equation}
\label{eq:3term}
\beta_{i+1} q_{i+1} = B q_i - \alpha_i q_i - \beta_i q_{i-1}.
\end{equation}
Therefore, in principle
 only three Lanczos vectors need to be stored in main memory.
As is well-known, 
in \emph{exact}  arithmetic, this 3-term recurrence would  deliver 
an orthonormal basis $\left\{q_1,\dots,q_m\right\}$ of 
$\mathcal{K}_m$. 
In the presence of rounding, orthogonality between the $q_i$'s  
is quickly lost, and so
a form of reorthogonalization is needed in practice and this will be discussed
shortly.
The Lanczos procedure is sketched in Algorithm~\ref{alg:lanczos}, in which matrix 
$Q_j \equiv [q_1,\dots,q_j]$ contains the basis constructed up to step $j$ as its
column vectors.

\begin{algorithm}
\caption{The $m$-step Lanczos algorithm.}
\begin{algorithmic}[1]
\State Input:  a Hermitian matrix $B\in\IC^{n\times n}$, and
an initial unit vector $q_1\in\IC^n$.
\State $q_0:=0$, $\beta_1:=0$
\For{$i=1,2,\dots,m$}
\State $w := B q_i - \beta_i q_{i-1}$
\State $\alpha_i := q_i^H w$
\State $w := w - \alpha_i q_i$
\State Reorthogonalize: $w := w - Q_j (Q_j^H w)$ \Comment{classical Gram-Schmidt}
\State $\beta_{i+1} := \|w\|_2$
\If{$\beta_{i+1}=0$}
\State $q_{i+1}=$ a random vector of unit norm that is orthogonal to $q_1, 
\ldots q_i$
\Else
\State $q_{i+1} := w/\beta_{i+1}$
\EndIf
\EndFor
\end{algorithmic}
\label{alg:lanczos}
\end{algorithm}

Let $T_m$ denote the
symmetric tridiagonal matrix
\begin{equation}
\label{eq:Tm}
T_m  =  \left [ \begin{array}{cccc}
\alpha_1  & \beta_2  & &  \\
\beta_2   & \alpha_2 & \ddots &  \\
          & \ddots   & \ddots & \beta_{m}  \\
          &                        & \beta_{m}   & \alpha_m \\
\end{array} \right ],
\end{equation}
where  the scalars $\alpha_i,\beta_i$ are those produced by the
Lanczos algorithm.
Relation (\ref{eq:3term}) can be rewritten in the form: 
\begin{equation}
\label{eq:AQ=QT}
B Q_m = Q_m T_m + \beta_{m+1} q_{m+1}e_{m}^H ,
\end{equation}
where $e_m$ is the $m$th column of the canonical basis and
$q_{m+1}$ is the last vector computed by the $m$-step Lanczos algorithm.
Let $(\theta_i,y_i)$ be an eigenpair of $T_m$.
In case of ambiguity,
$(\theta_i^{(m)}, y_i^{(m)})$ will denote the same eigenpair
at the $m$th step of the process.  
Then the eigenvalues $\theta_i\Sup{m}$, known as Ritz values,
will approximate 
some of the eigenvalues of $B$ as $m$ increases.
The vectors $u_i^{(m)}=Q_m y_i\Sup{m}$, referred to as Ritz vectors, 
will approximate the related eigenvectors of $B$.
The Lanczos algorithm quickly yields good approximations
to extreme eigenvalues of $B$ while convergence is often 
much slower for those eigenvalues located deep inside
 the spectrum    \cite{Parlett-book,Saad-book3}.

As was already mentioned, 
the $q_i$'s  form an orthonormal basis in theory, but in practice they loose 
their orthogonality soon after at least one eigenvector starts converging,
leading to an unstable underlying computation.
This was studied in detail by Paige
in the 1970s \cite{paige:phd71,paige:lanczos72,paige:lanczos76}. 
A remedy to this problem is to reorthogonalize the vectors when needed.
Since we will use a restarted form of the Lanczos algorithm with
moderate dimensions, we decided to apply full re-orthogonalization 
to enforce orthogonality among the $q_i$'s to working precision 
(Line 7 in Algorithm~\ref{alg:lanczos}).

\subsubsection{Thick restart}\label{sec:TR}
The   method   \texttt{Filtlan}  described   in   \cite{Filtlan-paper}
essentially employs  the non-restarted  Lanczos algorithm with  $B$ in
the form of a  polynomial in $A$ and the $i$ loop  is not halted after
$m$  steps but  only  when  all eigenvalues  inside  the interval  are
captured. One main  issue with \texttt{Filtlan} is that  if the number
of eigenvalues  inside the interval  is large or the  eigenvalues near
the  boundaries of  the interval  are  clustered, then  the number  of
Lanczos steps  required by \filtlan\  may become quite large  and this
can limit  its applicability.  This is because  reorthogonalization is
necessary and becomes  expensive.  Memory  cost becomes another
issue as  large bases  must now  be saved.  An undesirable  feature of
non-restarted procedures  is that we  do not  know in advance  how big
these  bases can  be,  because the number of steps required 
 for convergence to take place is unknown.
Therefore, for very large problems for which limited memory procedures are required, restarting becomes mandatory.
In the algorithms described in this paper, we limit the total memory need to 
that of $m+1$ vectors plus an additional set of $nev$ vectors for the computed eigenvectors.


In a standard restarted procedure, an initial vector $q$ is selected 
from the current Lanczos  iterations and the Lanczos procedure
is restarted with this vector as the initial vector $q_1$.
We 
adopt the TR procedure~\cite{Andreas-al-thickr,wusi:00} as it
blends quite naturally with the filtering technique employed here.
The main idea of the TR procedures is to restart not with one vector
but with multiple ``wanted'' Ritz vectors. This technique implements essentially the idea of implicit restarting
\cite{leso:96} in a different form. 

Here we recall the main steps of TR, assuming
for simplicity that there are no locked vectors yet. In this case, suppose that after $m$ steps of the Lanczos algorithm, we end up with, say $l$ 
Ritz vectors $u_1, u_2, \ldots, u_l$ that we wish to use as the restarting vectors along with the last vector $q_{m+1}$. 
An important observation is that each Ritz vector has a residual in the direction of
$q_{m+1}$~\cite{Saad-book3}. Specifically, we have
\eq{eq:res} 
(B - \theta_i I) u_i = (\beta_{m+1} e_m^T y_i) q_{m+1} \equiv s_i q_{m+1}, 
\quad \mbox{for} \quad i=1,\ldots,l .
\en
Let $\hat q_i = u_i$ and $\hat q_{l+1}=q_{m+1}$. 
Rewriting \nref{eq:res} in a matrix form, it follows that
\eq{eq:tr2}
B \hat Q_l = \hat Q_l \Theta_l + \hat q_{l+1}s^H , 
\en
where $\hat Q_l = \left[\hat q_1, \ldots, \hat q_l \right]$, 
the diagonal matrix $\Theta_l$ consists of the corresponding Ritz values, 
and the vector $s^H$ is given by $s^H=\left[s_1,\ldots,s_l\right]$.
After restart, to perform one step from $\hat q_{l+1}$, we proceed as in the 
Arnoldi algorithm:
\eq{eq:trstep}
\beta_{l+2} \hat q_{l+2} = B \hat q_{l+1} - \sum_{i=1}^{l+1} (B \hat q_{l+1}, \hat q_i) \hat q_i  = B \hat q_{l+1} - \sum_{i=1}^{l} s_i \hat q_i - \alpha_{l+1} \hat q_{l+1},
\en
where $\alpha_{l+1}=(B \hat q_{l+1}, \hat q_{l+1})$.
Thus, after completing 
 the first step after the restart, we get the factorization
\eq{eq:TRstep}
B \hat Q_{l+1} = \hat Q_{l+1} \hat T_{l+1} + \beta_{l+2} \hat q_{l+2} e_{l+1}^H
\quad 
\mbox{with}
\quad
\hat T_{l+1} = \begin{pmatrix}
                \Theta_l & s \\
                s^H & \alpha_{l+1}
               \end{pmatrix}.
\en
From this step on, the algorithm proceeds in the same way as 
the standard Lanczos algorithm, i.e., $\hat q_{k}$ for $k\ge l+3$ is computed using the 3-term recurrence, until the dimension reaches $m$, and
the matrix $\hat T_m$ will be completed by the tridiagonal submatrix consisting of $\alpha_k$ and $\beta_k$.
An induction argument will show that 
a vector $q_j$ computed in this way will be orthogonal to all previous
$q_i$'s. 

\subsubsection{Deflation}
In addition to TR, an essential ingredient to the success of 
the procedure is the inclusion of `locking', i.e., explicit deflation. Once an eigenpair has converged, we add the eigenvector
as a column to a set $U$ of the computed eigenvectors and exclude it from the search subspace, i.e., we will compute the eigenpairs of  $(I-UU^H) B$ in the subsequent iterations. There are several benefits to the use of locking. First, all the locked eigenvalues will be transformed to zeros. This is critical because when deflation is used, an eigenvector cannot be computed a second time. Second,
deflation helps to capture any eigenpair that is still left in the interval. In essence this procedure acts as a form of preconditioning by creating larger gaps among  the eigenpairs not yet discovered, resulting in faster convergence.  Finally, locking provides an efficient means to compute multiple or clustered eigenvalues without resorting to a block method. Since the filter $\rho_k$ may transform some distinct eigenvalues inside $[\xi, \ \eta]$ into multiple ones, locking becomes necessary to prevent missing eigenvalues.
  
\subsection{Practical details}\label{sec:alg} 
\label{sec:all}
We now put together the three important
ingredients  of polynomial  filtering, restarting, and deflation. 
In  its simplest form, the filtered Lanczos procedure
will first perform $m$  steps  of the  Lanczos
algorithm with the  matrix $B = \rho_k(\widehat{A})$, and  then it will restart.
Once the $m$ Lanczos steps are executed we obtain $m$ Ritz values $\theta_1, \theta_2, \ldots, \theta_m$, such that
\[
 \theta_1 \ge  \theta_2 \ge \cdots   \ge \theta_{l} \ge \rho_k({\xi}) > \cdots \ge \theta_m .
\]
These are eigenvalues of the tridiagonal matrix $T_m$ in \nref{eq:Tm}, with
$y_1, y_2, \ldots, y_m$ being the associated eigenvectors. 
The approximate eigenvectors of $B$ are the Ritz vectors $u_j = Q_m y_j$. 
Recall from  Section~\ref{sec:filters} that when the polynomial filter is constructed, 
we select a 
``\emph{bar}'' value~$\phi$, equal to $\rho_k(\xi)$ and $\rho_k(\eta)$, 
that separates  the \emph{wanted}  eigenvalues  (those in
\intv)  from  \emph{unwanted}  ones. 
 The eigenvalues $\theta_i$   below  
$\phi$, i.e., $\theta_{l+1},\ldots,\theta_{m}$, 
  are discarded.   For $\theta_{1},\ldots,\theta_{l}$, we  compute  the 
associated
(unit norm) Ritz vectors $ u_1,  u_2, \ldots, u_l$
and evaluate  
their  Rayleigh  quotients relative to $A$, which is
$\tilde \lambda_i =   u_i^H A  u_i$.   A second check is
performed at this point that discards any $\tilde \lambda_i$ falls outside \intv. 

For the remaining  approximate eigenpairs $(\tla_i,
u_i)$  we compute  the residual  norms $\|  A   u_i-
\tilde \lambda_i   u_i \|_2$.  
If a Ritz pair  has converged, we add the Ritz vector to the ``locked set'' of vectors.  
All future  iterations will
perform  a  deflation step  against  such a set.
The other, unconverged, candidate  eigenvector approximations are added to the ``TR set'', and the algorithm is then restarted.
Thus, there are three types of basis vectors used at any given step:
the locked vectors (converged eigenvectors), the vectors selected for TR, and the remaining Lanczos vectors.
The whole procedure is sketched in Algorithm~\ref{alg:Filantres}. 

\begin{algorithm}
\caption{The Filtered Lanczos algorithm with TR and deflation.\label{alg:Filantres}}
\begin{algorithmic}[1]
\State Input:  a Hermitian matrix $A\in\IC^{n\times n}$, and
an initial unit vector $q_1\in\IC^n$.

\State Obtain polynomial $\rho_k(t)$ and ``bar'' value $\phi$  

\State $q_0:=0$, $\beta_1:=0$, $Its:=0$, $lock := 0$, $l:=0$, $U := [\ ]$

\While{$Its \leq MaxIts$} 

\State If $l>0$, perform a TR step \nref{eq:trstep}, which results $\hat Q_{l+2}$ and $\hat T_{l+1}$
in \nref{eq:TRstep} 
\For{$i=l+1,\dots,m$}
\State Perform Lines~4 to 13 of 
Algorithm \ref{alg:lanczos} with $B = (I-UU^H)\rho_k(\widehat{A})$
\State Set $Its :=Its+1$
\EndFor
\State Result: $\hat Q_{m+1}  \ \in \ \IC^{n \times (m+1)}$, 
$ \hat T_m \ \in \ \IR^{m\times m}$
\State Compute candidate Ritz pairs, i.e.,
$(\theta_j, u_j)$ with $\theta_j\ge \phi$
\State Set $\hat Q := [\ ]$ and $l:=0$
\For{each candidate pair $(\theta_j, u_j)$}
\State Compute $\tilde \lambda_j =  u_j ^H A  u_j$  
\State If $\tilde \lambda_j \notin [\xi,\ \eta]$ ignore this pair 
\If{\{$(\tilde \lambda_j, u_j)$ has converged\}}
\State Add $u_j$ to Locked set $U := [U,  u_j]$
\State Set $lock:=lock+1$
\Else
\State Add $u_j$ to TR set $\hat Q := [\hat Q, u_j]$
\State Set $l:=l+1$
\EndIf  
\EndFor
\If{\{No candidates found\}  or \{No vectors in TR set\}} 
\State Stop
\EndIf
\EndWhile
\end{algorithmic}
\end{algorithm}

The following notes may help clarify certain aspects of the algorithm. 
Lines 6-9 execute a cycle of the Lanczos procedure that
performs matrix-vector products with $\rho_k(\widehat{A})$ and orthonormalizes
the resulting vectors against the vectors in the locked set.
In Line 11, eigenpairs of the tridiagonal matrix $T_m$ are computed and 
the Ritz values 
below the threshold $\phi $ are rejected.
The algorithm then computes 
the Ritz vectors associated with the remaining Ritz values.
The loop starting in Line 13  determines if these are to be rejected
(Line 15), put into the Locked set (Lines 16-18),
or into the TR set (Lines 20-21). 

To reduce computational costs, it is preferable to restart whenever enough 
eigenvectors have converged for $B$.
In Algorithm~\ref{alg:Filantres}, the Lanczos process is restarted when the dimension reaches $m$, i.e., when $i=m$.
A check for early restart is also triggered whenever every $Ncycle$ steps have been performed and $Ntest$ steps have been executed since the last restart, i.e., when $Its \mbox{ mod } Ncycle = 0$ and $i\ge l+ Ntest$.
In our implementation, $Ntest=50$ and $Ncycle=30$ are used.
In this check, an  eigendecomposition of $T_{i}$ 
is performed  and the number of eigenvalues larger than $\phi$,
that have  converged,  is counted.
If this number is larger  than $nev-lock$, where $nev$ is an estimate of the number of eigenvalues inside the concerned interval  given by the DOS algorithm,
the algorithm  breaks   the  $i$   loop  and restarts.

Another implementation detail is that the orthogonalizations
in Line~7  of Algorithm~\ref{alg:lanczos} and 
Line~7 of Algorithm~\ref{alg:Filantres} consist of at most two steps of the 
classical Gram-Schmidt orthogonalization \cite{luc05,Giraud20051069} with the DGKS \cite{citeulike:7035979} correction.

\subsection{Filtered subspace iteration}\label{sec:filtsub}
The described filtering  procedure can also be  combined with subspace
iteration~\cite{Saad-book3},  which  may  be advantageous  in  certain
applications,  such  as in  electronic  structure calculations based on 
 Density Function Theory (DFT) ~\cite{escm09}. This section gives a brief description 
 of this approach to point out the main differences with the TR
Lanczos. 

Given a matrix $B$ and block $X$ of $s$ approximate eigenvectors, 
the subspace iteration is of the form $\bar X \gets B X$, where 
the columns of $\bar X$ are then transformed via the Rayleigh--Ritz step~into Ritz vectors 
that are used as an input for the next iteration. 
The process is repeated until convergence is reached, 
and is guaranteed to deliver eigenvectors corresponding to the $s$ 
dominant eigenvalues of $B$, i.e., those with largest magnitude.
%
The method is attractive for its remarkable simplicity, robustness, and 
low memory requirement \cite{saad15}. 
In contrast to the Lanczos algorithm, it provides more flexibility in enabling 
the use of a good 
initial guess, which can lead to a significant reduction of computations in some
situations. 
Furthermore, as a block method, the subspace iteration offers an 
additional level of concurrency 
and allows for a more intensive use of BLAS3 operations.

When combined with filtering, the subspace iteration is applied to the
matrix $B  = \rho_k(\widehat{A})$,  whose largest eigenvalues  are the
images of the wanted eigenvalues of  $A$ from $[\xi,\ \eta]$ under the
approximate delta  function transform  $\rho_k(t)$.  
%
This behavior is  different from that of the  filtered Lanczos method,
which also  converges to the bottom  part of the spectrum  of $B$, and
therefore requires  detection of  converged smallest  eigenvalues as is
done  in step  11 of Algorithm~\ref{alg:Filantres} that selects
the Ritz values above the threshold $\phi$.

Note that,  subspace iteration requires  a reasonable estimate  $s$ of
the number  of eigenvalues of $A$  in the interval $[\xi,\  \eta]$. As
has already been discussed, such an estimate can be obtained, from the
DOS  algorithm  \cite{LinYangSaad2013-TR},  and  should  ideally  only
slightly  exceed the  actual number  of wanted  eigenvalues. Moreover,
subspace iteration  can also benefit  from deflation of  the converged
eigenpairs \cite{Saad-book3,ls_feast}.

\section{Numerical experiments}\label{sec:num}
\texttt{FiltLanTR}  has been  implemented  in  C and  the
experiments  in  Sections \ref{subsec:lap3d}-\ref{subsec:parsec}  were
performed  in sequential  mode  on  a Linux  machine  with Intel  Core
i7-4770 processor  and 16G memory.  The OpenMP experiments  in Section
\ref{subsec:openmp}  were  performed on  a  node  of Cori  (Cray  XC40
supercomputer,  Phase~I) at  the National  Energy Research  Scientific
Computing Center (NERSC).   Each node of this system  has two sockets,
with each socket  populated with a 16-core  Intel ``Haswell'' processor
at 2.3 GHz (i.e.,  32 cores per node), and is equipped  with 128 GB of
DDR4 2133Mhz MHz  memory. The code was compiled with  the gcc compiler
using the -O2  optimization level.  The convergence  tolerance for the
residual norm  was set at  $10^{-8}$ and the  Lanczos $\sigma$-damping
was used for all polynomial filters.


\subsection{3D Laplacian}\label{subsec:lap3d}
We first use  a model problem to illustrate the  effects of the ``bar'' value $\phi$ and
the   number   of   slices     on   the   performance   of
\texttt{FiltLanTR}.  The  model problem  was  selected  as a  negative
Laplacian  $-\Delta$ operator  subject to  the homogeneous  Dirichlet
boundary conditions over the unit  cube $[0,\ 1]^3$. Discretizing this
operator on a  $60\times 60\times 60$ grid with  the $7$-point stencil
results  in a matrix of size $n=216,000$. The  spectrum of  this
matrix is inside the interval  $[0.00795,\ 11.99205]$. The goal is to  compute
all the $3406$ eigenpairs inside the interval $[0.6,\ 1.2]$.

\paragraph{Selection of $\phi$} 
In the first experiment we fixed the number of slices at $10$ and varied the value of $\phi$ to study its influence on \texttt{FiltLanTR}.
The DOS algorithm \cite{LinYangSaad2013-TR}  was exploited to 
partition $[0.6,\ 1.2]$ into $10$ slices $[\xi_i,\ \eta_i]_
{i=1,\ldots,10}$  so that each $[\xi_i,\ \eta_i]$ contains 
roughly $340$ eigenvalues, as shown in the second column of 
Table \ref{tab:Laplacian}. To validate the effectiveness of 
this partitioning, the exact number of eigenvalues in each 
$[\xi_i,\ \eta_i]$ is provided in the third column of the same table. 
As can be seen, the difference between the approximate and the exact 
number is less than $10$ for most sub-intervals. 
In the  following discussion, we will 
 refer to each sub-interval by its index $i$ shown in 
the first column of Table \ref{tab:Laplacian}. 
\begin{table}[tbh]
\caption{Partitioning $[0.6,1.2]$ into $10$ sub-intervals for the 3D discrete Laplacian example.}
\label{tab:Laplacian}
\centering \tabcolsep8pt
\begin{tabular}
[c]{c|c|c|c}\hline
$i$ &  $[\xi_i, \eta_i]$ & $\eta_i-\xi_i$ & \#eigs  \\\hline
1&$[0.60000, 0.67568]$ & 0.07568 & 337   \\ 
2&$[0.67568, 0.74715]$ & 0.07147 & 351  \\ 
3&$[0.74715, 0.81321]$ & 0.06606 & 355  \\ 
4&$[0.81321, 0.87568]$ & 0.06247 & 321   \\ 
5&$[0.87568, 0.93574]$ & 0.06006 & 333    \\ 
6&$[0.93574, 0.99339]$ & 0.05765 & 340  \\ 
7&$[0.99339, 1.04805]$ & 0.05466 & 348   \\ 
8&$[1.04805, 1.10090]$ & 0.05285 & 339  \\ 
9&$[1.10090, 1.15255]$ & 0.05165 & 334   \\ 
10&$[1.15255, 1.20000]$& 0.04745 & 348  \\ \hline
\end{tabular}
\end{table}

The computations of the eigenpairs were 
performed independently for each sub-interval. 
The Krylov subspace dimension and the maximum
iteration number were set as $4nev$ and $16nev$, respectively, where
$nev$ is the estimated number of eigenvalues in each sub-interval. In
this example we have $nev=341$.

 Tables~\ref{tab:Laplacianresult}-\ref{tab:Laplacianresult3}, show
results of the \texttt{FiltLanTR} runs for which the values of $\phi$ have 
been  set to $0.6$, $0.8$, and $0.9$, respectively.  The tables list
the degree of the filter  polynomials, the number of total iterations,
number of  matrix-vector products (matvecs), the CPU  time for matrix-vector
products,  the total  CPU  time, as  well as  the  maximum and  average
residual           norms of the computed eigenpairs.
The following observations can be made.  
With a fixed $\phi$, the iteration number
is  almost  the  same  for  each  $[\xi_i,\  \eta_i]$  (especially  in
Tables~\ref{tab:Laplacianresult} and \ref{tab:Laplacianresult3}).
However, 
the filter degree  grows for sub-intervals deeper inside the spectrum.
This results in a higher computational  cost per  iteration,  
as indicated by
the increase  in the number  of matvecs and the CPU time in columns five 
and six.
Note that the computational accuracy remains 
identical for different sub-intervals.

\begin{table}[tbh]
\caption{Numerical results for the 3D discrete Laplacian example with $\phi = 0.6$. The number of eigenvalues found inside each $[\xi_i, \eta_i]$ is equal to the exact number shown in Table \ref{tab:Laplacian}.}%
\label{tab:Laplacianresult}
\centering\tabcolsep4pt
\begin{tabular}
[c]{c|c|c|c|c|c|c|c}\hline
\multirow{2}{*}{Interval}  & \multirow{2}{*}{deg} & \multirow{2}{*}{iter} & \multirow{2}{*}{matvecs}& \multicolumn{2}{|c|}{CPU Time (sec.)}  & \multicolumn{2}{|c}{Residual}\\\cline{5-8}
 & & & & Matvecs & Total & Max & Avg \\\cline{1-8}
1 & 172 & 1567  & 270055  & 551.36 & 808.33 &$3.30\!\times\! 10^{-9}$  & $2.90\!\times\! 10^{-11}$\\ 
2 & 192 & 1514 &291246 & 594.83 & 840.29& $4.70\!\times\! 10^{-9}$ &$4.79\!\times \!10^{-11}$ \\ 
3 & 216  & 1513 & 327395 &671.19 & 918.16&$3.30\!\times\! 10^{-9}$ & $3.20\!\times \!10^{-11}$\\ 
4 &   237  & 1516 & 359863 & 736.95&973.66&$7.80\!\times\! 10^{-9}$ & $7.08\!\times \!10^{-11}$\\ 
5& 254 & 1543  & 392525 & 803.71&1051.91& $8.50\!\times\! 10^{-11}$&$1.20\!\times\! 10^{-12}$\\ 
6 & 272 &1511  &  411622 &843.84& 1084.82& $4.90\!\times \!10^{-9}$& $5.57\!\times\! 10^{-11}$ \\ 
7&  294   & 1562   & 459897&943.33&1202.39& $4.50\!\times\!10^{-9} $& $3.61\!\times \!10^{-11}$\\ 
8& 311 & 1511 &470589 &965.23&1205.87& $1.50\!\times\! 10^{-9}$ & $2.64\!\times\! 10^{-11}$\\ 
9&325    & 1565 & 509308  & 1045.54 & 1300.20& $1.10\!\times\! 10^{-9}$&  $1.73\!\times \!10^{-11}$\\ 
10& 361  &  1538 & 555948 &1140.49 &1392.46&$3.80\!\times\! 10^{-9}$ & $3.76\!\times\! 10^{-11}$\\\hline
\end{tabular}
\end{table}

\begin{table}[tbh]
\caption{Numerical results for the 3D discrete Laplacian example with $\phi = 0.8$. The number of eigenvalues found inside each $[\xi_i, \eta_i]$ is equal to the exact number shown in Table \ref{tab:Laplacian}.}%
\label{tab:Laplacianresult2}
\centering\tabcolsep4pt
\begin{tabular}
[c]{c|c|c|c|c|c|c|c}\hline
\multirow{2}{*}{Interval}  & \multirow{2}{*}{deg} & \multirow{2}{*}{iter} & \multirow{2}{*}{matvecs}& \multicolumn{2}{|c|}{CPU Time(sec.)}  & \multicolumn{2}{|c}{Residual}\\\cline{5-8}
 & & & & Matvecs & Total & Max & Avg \\\cline{1-8}
1 & 116 & 1814  & 210892  & 430.11 & 759.24 & $6.90\!\times \!10^{-9}$&$7.02\!\times\! 10^{-11}$\\ 
2& 129 & 2233 & 288681 & 587.14& 986.67& $5.30\!\times\! 10^{-9}$&$7.39\!\times\! 10^{-11}$\\ 
3 & 145  & 2225 & 323293 &658.44 & 1059.57& $6.60\!\times\! 10^{-9}$&$5.25\!\times\! 10^{-11}$\\ 
4 &   159  & 1785 & 284309 & 580.09&891.46& $3.60\!\times\! 10^{-9}$&$4.72\!\times \!10^{-11}$\\ 
5& 171 & 2239  & 383553 & 787.00&1180.67& $6.80\!\times\! 10^{-9}$&$9.45\!\times \!10^{-11}$\\ 
6 & 183 & 2262  &  414668 &848.71& 1255.92& $9.90\!\times\! 10^{-9}$&$1.13\!\times\! 10^{-11}$\\ 
7&  198   & 2277   & 451621&922.64&1338.47& $2.30\!\times\! 10^{-9}$&$3.64\!\times\! 10^{-11}$\\ 
8& 209 & 1783  &373211 &762.39&1079.30& $8.50\!\times\! 10^{-9}$&$1.34\!\times\! 10^{-10}$\\ 
9 & 219    & 2283 & 500774  & 1023.24 & 1433.04& $4.30\!\times\! 10^{-9}$&$4.41\!\times \!10^{-11}$\\ 
10 & 243  &  1753 & 426586 & 874.11 & 1184.76& $5.70\!\times\! 10^{-9}$&$1.41\!\times\! 10^{-11}$\\\hline
\end{tabular}
\end{table}

\begin{table}[tbh]
\caption{Numerical results for the 3D discrete Laplacian example with $\phi = 0.9$. The number of eigenvalues found inside each $[\xi_i, \eta_i]$ is 
equal to the exact number shown in Table \ref{tab:Laplacian}.}%
\label{tab:Laplacianresult3}
\centering\tabcolsep4pt
\begin{tabular}
[c]{c|c|c|c|c|c|c|c}\hline
\multirow{2}{*}{Interval}  & \multirow{2}{*}{deg} & \multirow{2}{*}{iter} & \multirow{2}{*}{matvecs}& \multicolumn{2}{|c|}{CPU Time (sec.)}  & \multicolumn{2}{|c}{Residual}\\\cline{5-8}
 & & & & Matvecs & Total & Max & Avg \\\cline{1-8}
1 & 80& 2636 & 211409  & 436.11 & 1006.91 & $9.90\!\times \!10^{-9}$&$1.90\!\times\! 10^{-10}$\\
2& 89 & 2549 & 227419 & 468.14& 990.79& $9.30\!\times\! 10^{-9}$&$1.62\!\times\! 10^{-10}$\\
3 & 100  & 2581 & 258682 &533.39 & 1076.35& $8.80\!\times\! 10^{-9}$&$1.80\!\times\! 10^{-10}$\\
4 &   110 & 2614 & 288105 & 594.18&1140.08& $9.90\!\times\! 10^{-9}$&$1.50\!\times \!10^{-10}$\\
5& 118 & 2606  &  308109& 635.90&1185.56& $9.80\!\times\! 10^{-9}$&$2.17\!\times \!10^{-10}$\\
6 & 126 & 2613  &  329855 &681.94& 1235.23& $6.40\!\times\! 10^{-9}$&$1.18\!\times\! 10^{-10}$\\
7&  137   & 2629   & 360834&746.59&1311.34& $8.10\!\times\! 10^{-9}$&$9.06\!\times\! 10^{-11}$\\
8& 145 & 2603  &378099 &781.48&1324.18& $5.00\!\times\! 
10^{-9}$&$3.45\!\times\! 10^{-11}$\\
9 &151   & 2581 & 390394  & 806.61 & 1339.95& $1.00\!\times\! 10^{-8}$&$3.06\!\times \!10^{-10}$\\
10 & 168  &  2575 & 433317 & 894.58 & 1428.38& $8.30\!\times\! 10^{-9}$&$2.84\!\times\! 10^{-10}$\\\hline
\end{tabular}
\end{table}

We next study  how  the statistics  in
Tables~\ref{tab:Laplacianresult}-\ref{tab:Laplacianresult3} change 
as  $\phi$ increases.  We illustrate this change through barcharts  in
Figure \ref{fig:Deg}. 
The first plot in Figure \ref{fig:Deg} shows that the degree of the
filter  decreases  monotonically  as  $\phi$ increases  for  all  $10$
intervals.  This   is  expected  from   the  way  the   polynomial  is
selected;     see     Section     \ref{sec:deltFun}     and     Figure
\ref{fig:findDeg}.  The  second subfigure  in the  same row
indicates  that fewer  iterations are  needed if  a smaller  $\phi$ is
specified. Thus, if available memory is tight one can trade memory for
computation by setting $\phi$ to a lower value, which would lead to
high degree polynomials and fewer iterations of \texttt{FiltLanTR}.
These first two subfigures 
lead to  the  conclusion that the total number of iterations
and the   polynomial degree change in opposite directions as  $\phi$
changes. 
It is hard  to predict  the value of  their product,
which is  the total  number of matvecs.

However,  the first
plot on the second row of Figure \ref{fig:Deg} indicates that a 
smaller $\phi$ can 
lead to a larger number of matvecs. 
Subfigure (2,2)  suggests
using  a moderate $\phi$  to reach  an optimal performance  over all
sub-intervals in terms of the  iteration time. 
The  default value of  $\phi$ is set  to 
$0.8$ in the subsequent experiments.


\begin{figure}[htb] 
\begin{tabular}
[c]{cc}%
\includegraphics[width=0.46\textwidth]{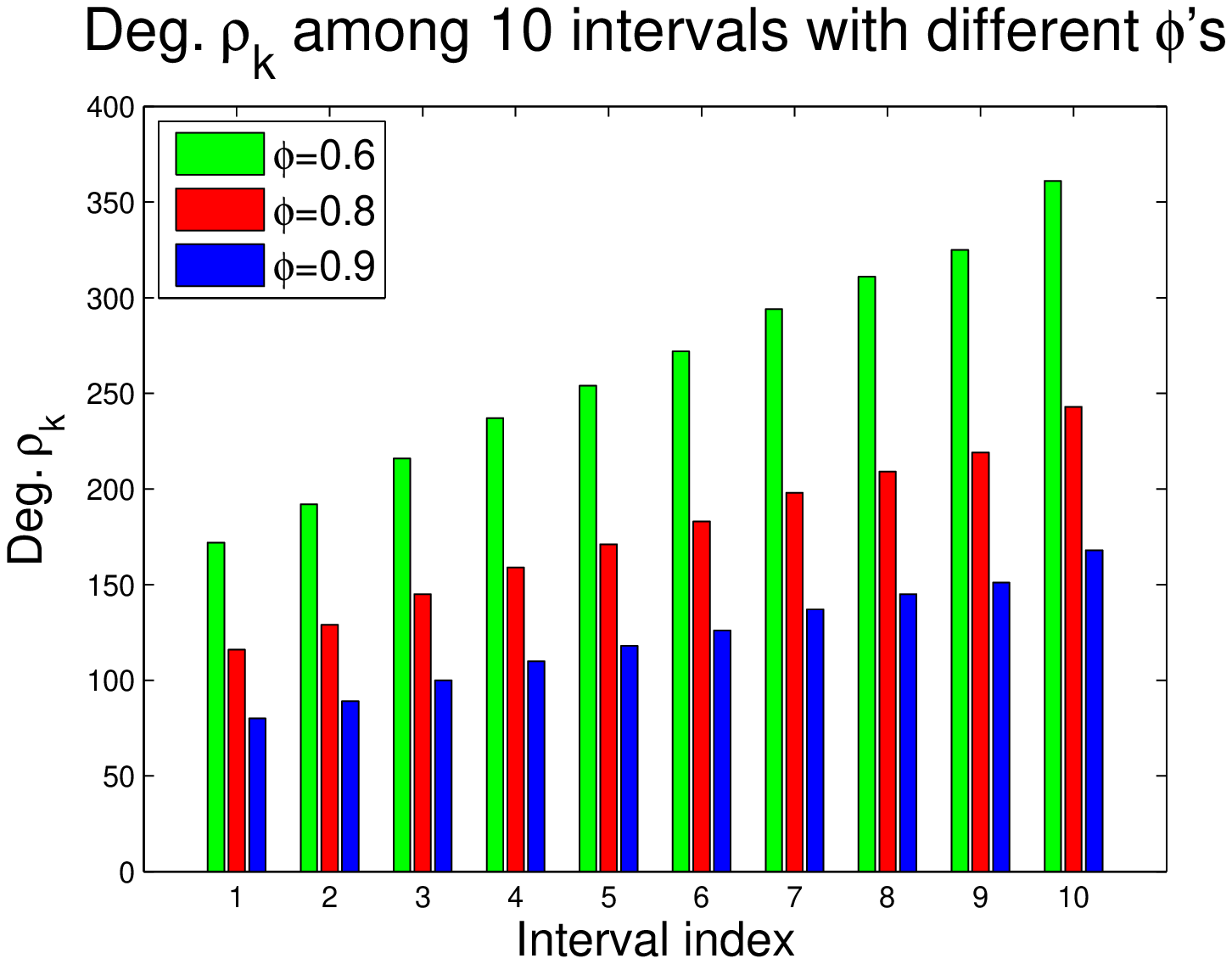}&
\includegraphics[width=0.46\textwidth]{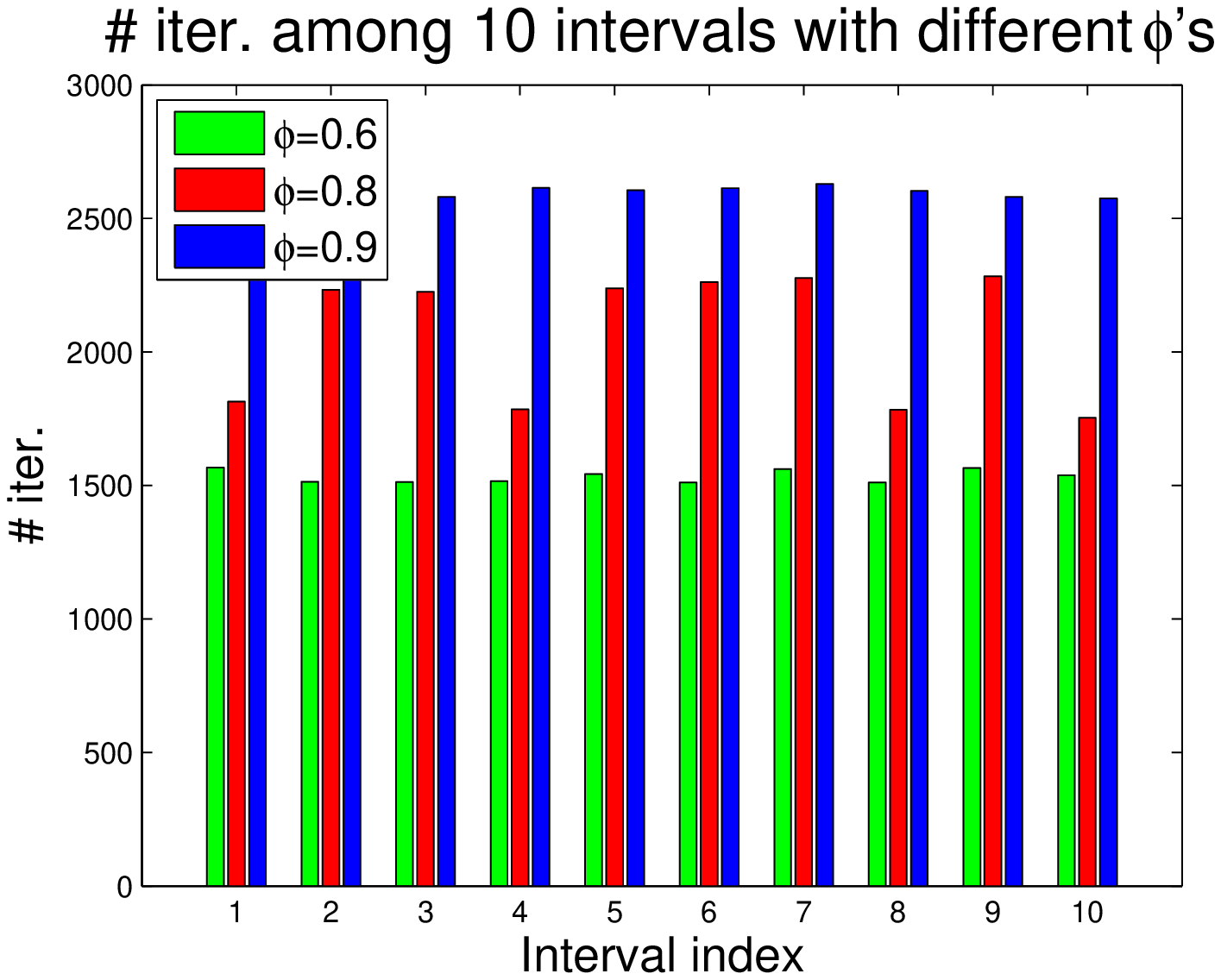} \\
\includegraphics[width=0.46\textwidth]{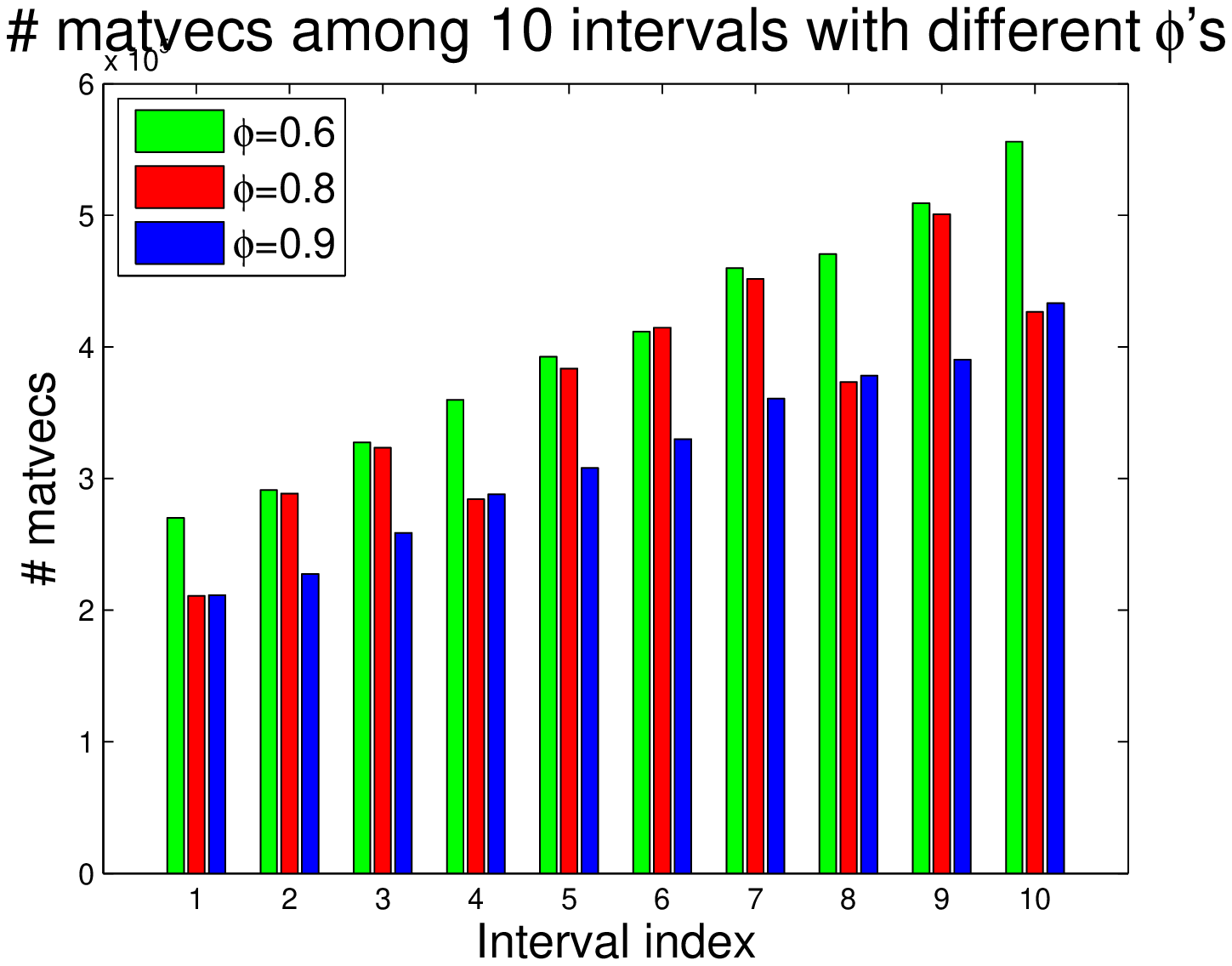}&
\includegraphics[width=0.46\textwidth]{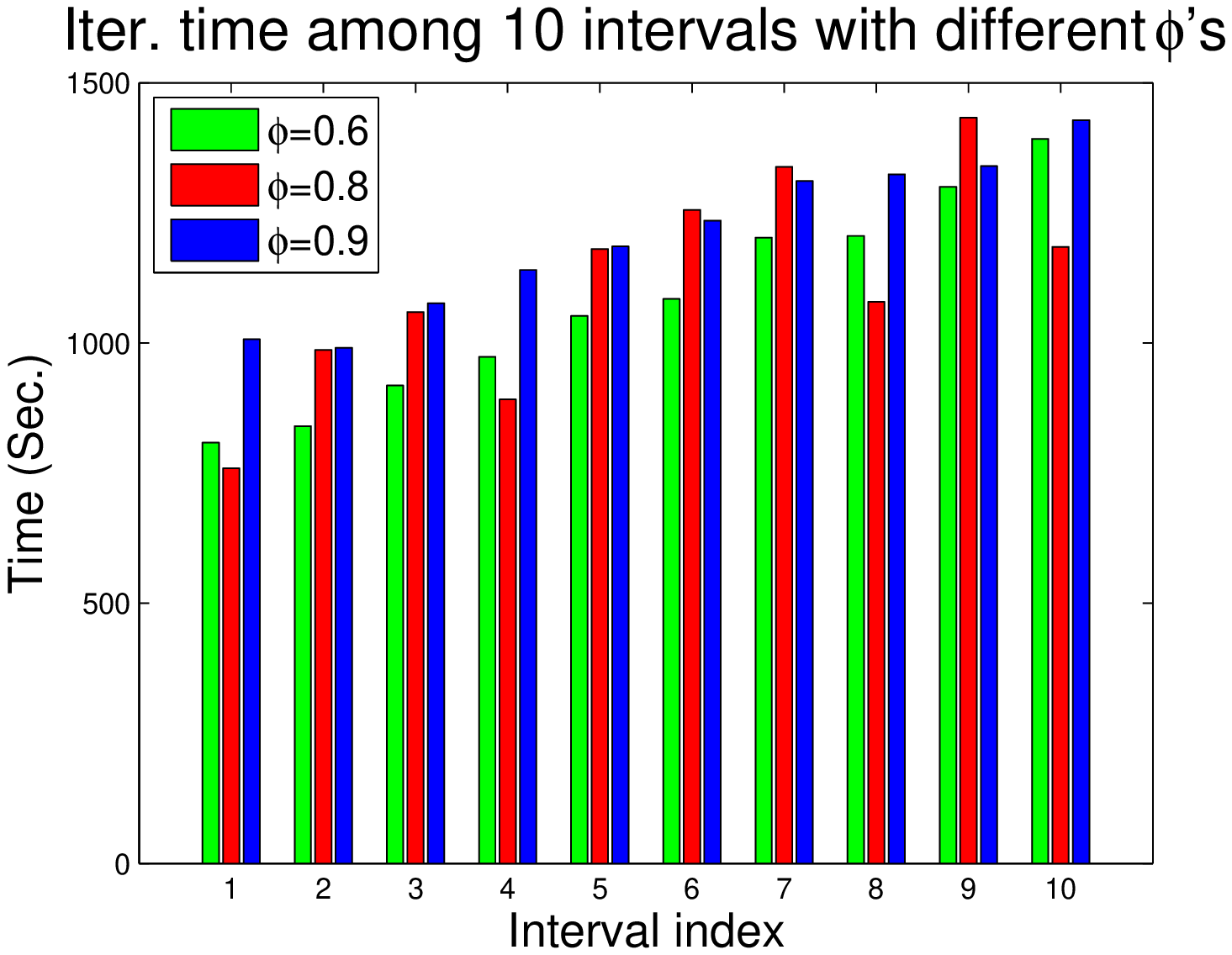}
\end{tabular}
\caption{Comparison of four statistics in Tables~\ref{tab:Laplacianresult}-
\ref{tab:Laplacianresult3} as $\phi$ increases from $0.6$ to $0.9$.}
\label{fig:Deg}
\end{figure}

\paragraph{Selection of the number of slices}
In the second experiment we fixed the value of $\phi$ at $0.8$ and varied the number of slices. 
The same $3$D discrete Laplacian eigenvalue problem was tested with $2$, $5$, $15$ and $20$ slices. 
The detailed computational results are tabulated in Appendix \ref{sec:differentslice}. 
For each value of the number of slices,  we averaged the statistics 
from this table across the  different slices and show the results in  
Table \ref{tab:Laplacianresultavg}.
From the  second column of Table  \ref{tab:Laplacianresultavg}, we see
that  on  average  the  degree of  the  polynomial  filters  increases
(roughly) linearly  with  the number  of  slices.  This observation  is  
further  supported by  the first  plot in  Figure \ref{fig:slice}.  

\begin{table}[tbh]
\caption{Averaged statistics across slices for different slice numbers,
 for the 3D discrete Laplacian example with $\phi = 0.8$.}%
\label{tab:Laplacianresultavg}
\centering\tabcolsep7.1pt
\begin{tabular}
[c]{c|c|c|c|c|c|c}\hline
\multirow{2}{*}{\#slices}  & \multirow{2}{*}{ deg} & \multirow{2}{*}{ iter} &\multicolumn{2}{|c|}{Matvecs}& \multicolumn{2}{|c}{CPU Time (sec.)} \\\cline{4-7}
 & & & $\#$/slice &  Time/slice & Total/slice &Total\\\cline{1-7}
2 & 34.5 & 9284.5 & 328832.0  & 681.74 & 11817.35 &23634.69\\
5 & 88.0  & 3891.8 & 347704.6 & 715.98 & 2126.97  &10634.85\\
10 & 177.2 & 2065.4 &  365758.8  & 747.69 &   1116.91 &11169.13\\
15 &266.1 &1351.9  &361809.0 &746.04 & 911.54   &  13673.12\\
20 & 356.8 & 1081.7 &  392083.3 & 807.46  &   909.62 & 18192.45\\\hline
\end{tabular}
\end{table}

If the number of iterations  per slice decreases linearly with  an
increasing number of
slices, then a constant matvec number and computational time per slice
would  be  expected   in  the  fourth  and  fifth   columns  of  Table
\ref{tab:Laplacianresultavg}. However, this  is not the case  due to a
slower reduction of the iteration number, as shown in the second plot
of the first row of Figure \ref{fig:slice}. 
The total iteration time essentially consists of 
two parts: the matrix-vector  product time and the reorthogonalization
time.   With too  few  slices,  it is reorthogonalization
that dominates the  overall computational cost. By contrast, too
many slices may lead to excessively high degree polynomial filters for
each slice, thus increase the  matvec time. This is  illustrated in the left
subfigure of  Figure \ref{fig:slice}  on the  second row.  

Finally, 
when the number of slices doubles  for a fixed interval, then, ideally, 
one would expect that the number of
eigenpairs in each slice is almost  halved. 
Then we would hope to
reduce the computational cost per slice roughly by half each time the number 
of slices is doubled, but
this is not true in practice. As shown in the last subfigure of the second row
of Figure \ref{fig:slice},
 by using more slices,  the gain
in computational  time per slice   will be
eventually  offset by the higher  degree of the  polynomial  filters. 
We found that a general rule of thumb 
is to have  roughly $200$  to $300$  eigenvalues per
slice.

\begin{figure}[htb] 
\centering
\begin{tabular}
[c]{cc}%
\includegraphics[width=0.46\textwidth]{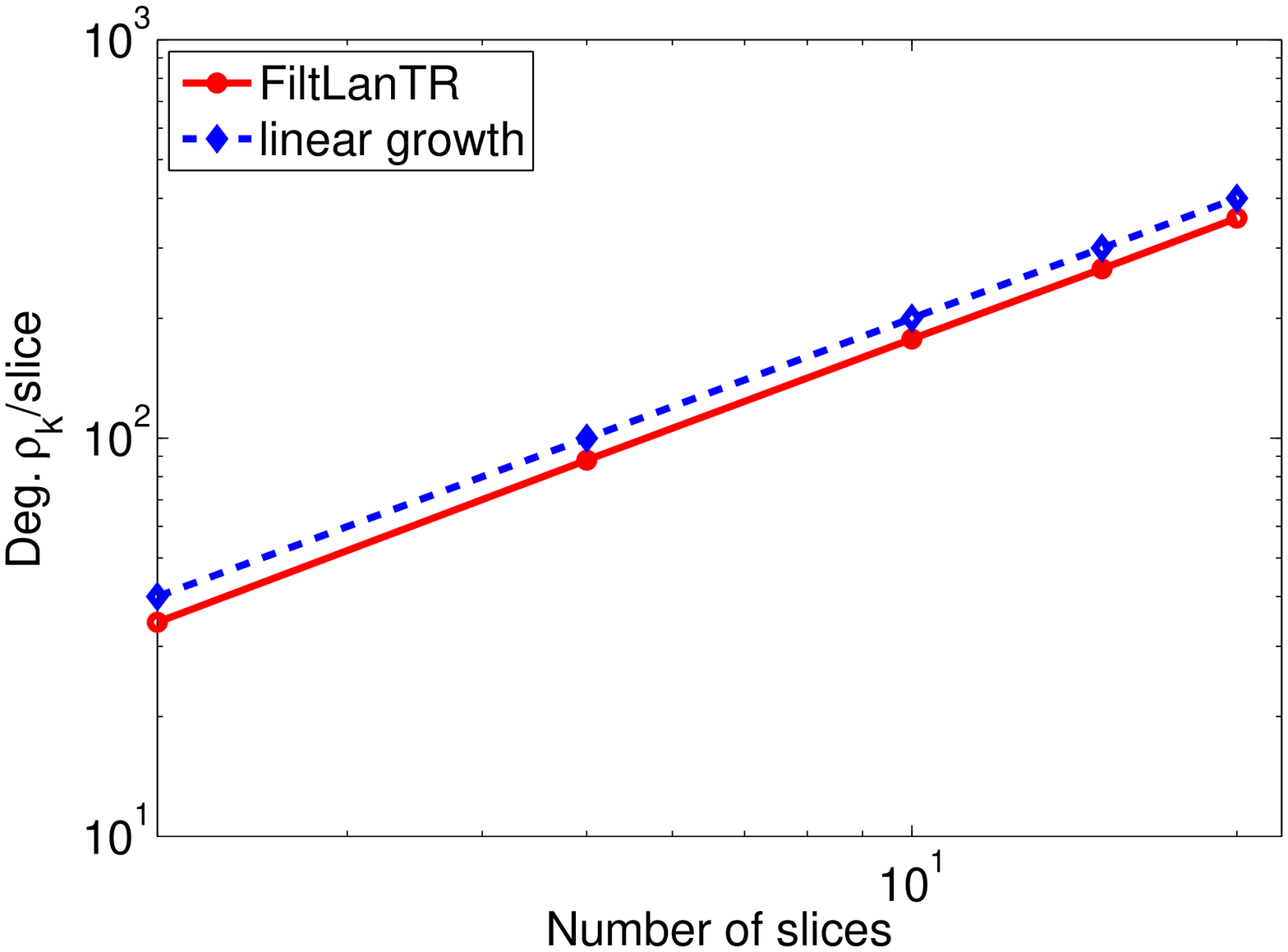}&
\includegraphics[width=0.46\textwidth]{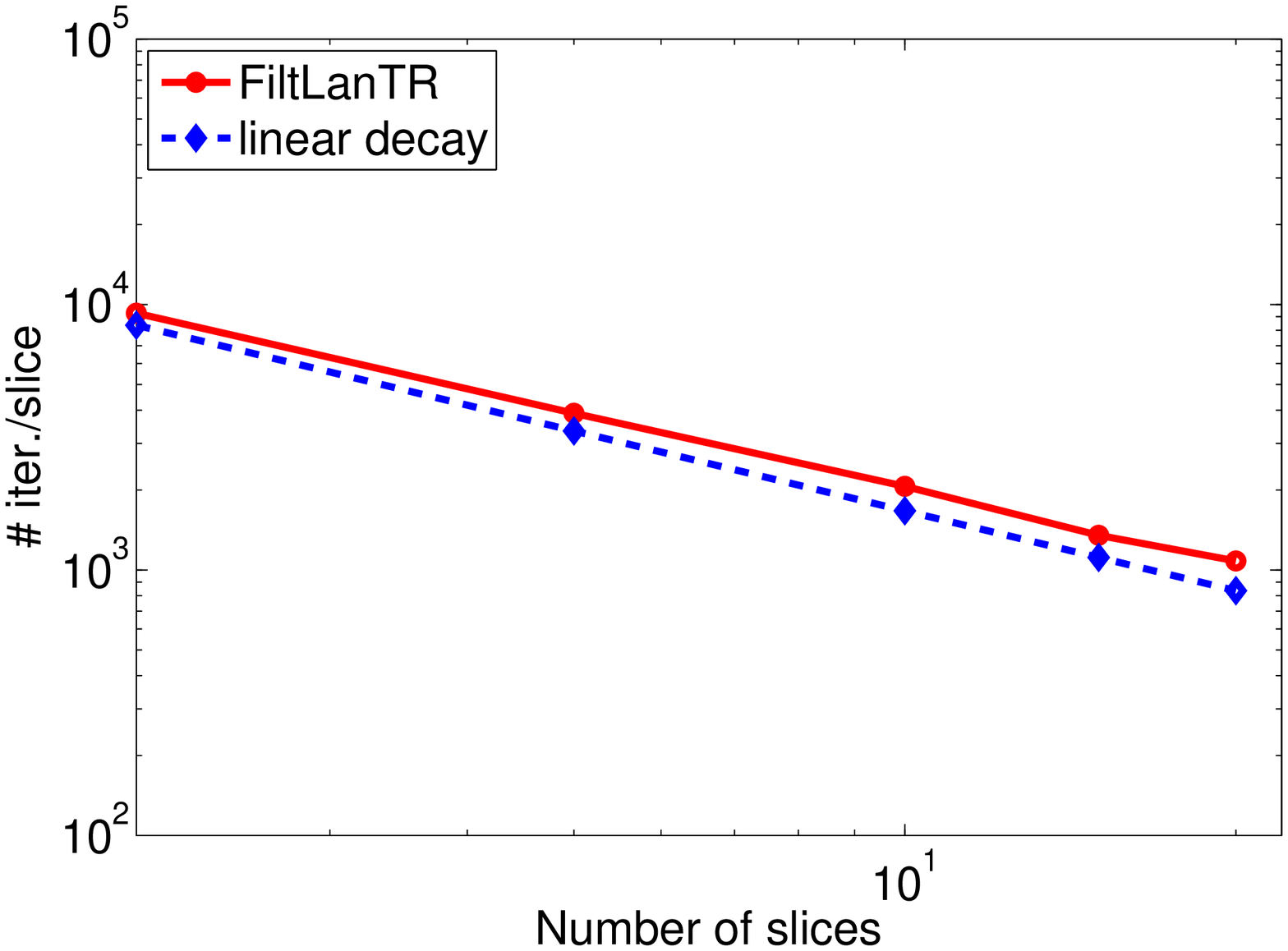} \\
\includegraphics[width=0.46\textwidth]{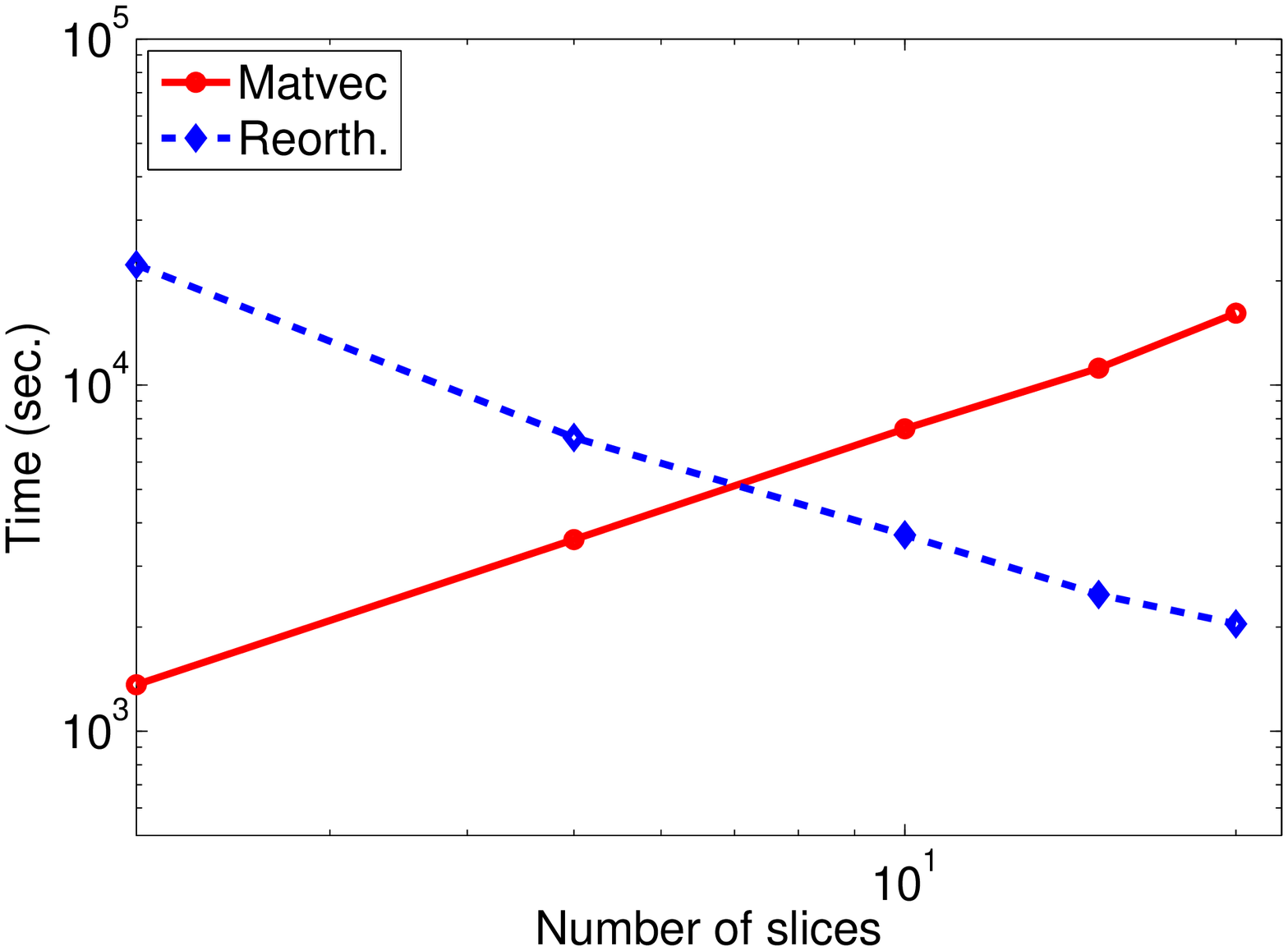}&
\includegraphics[width=0.46\textwidth]{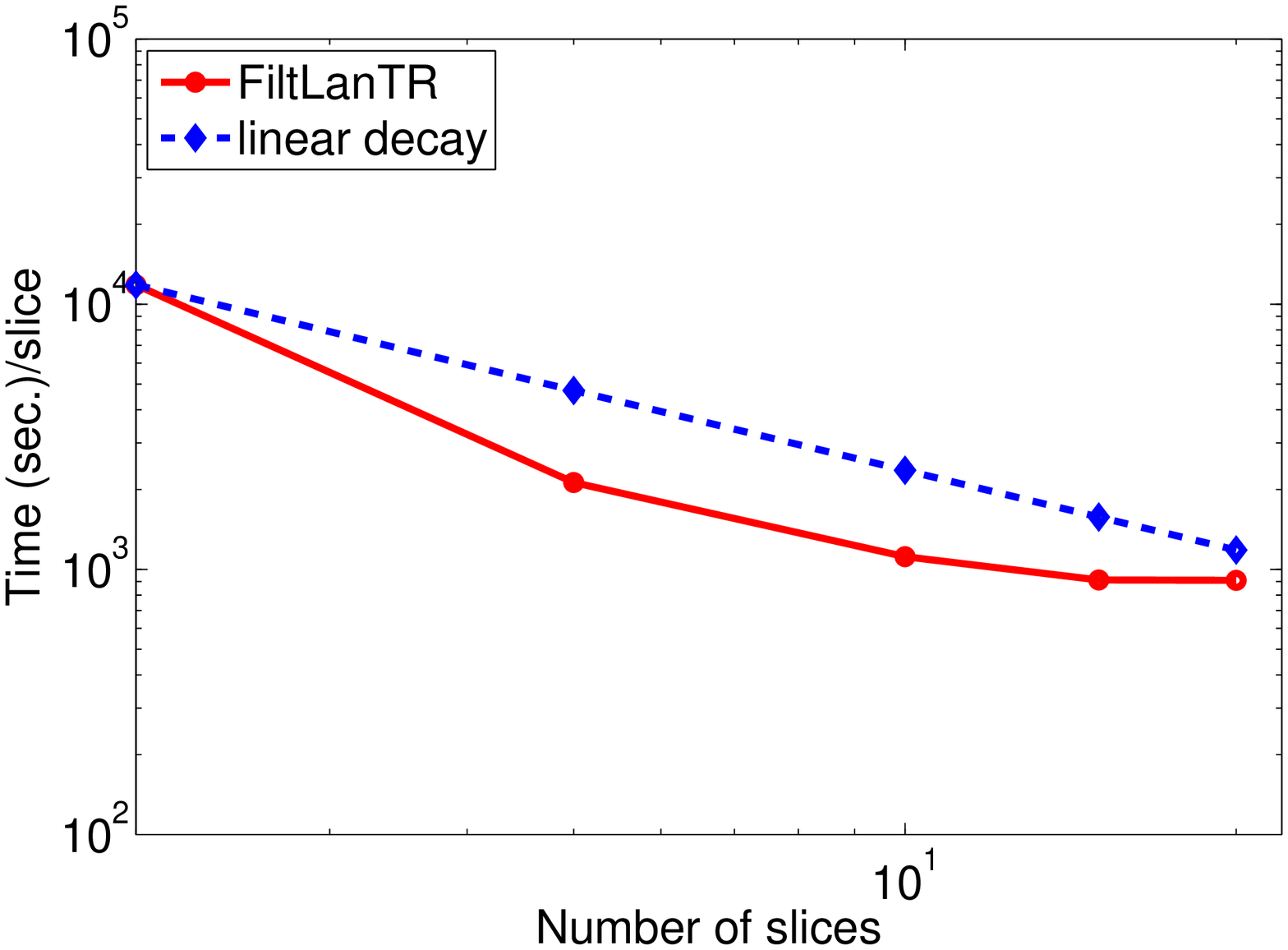} 
\end{tabular}
\caption{Trend plot of computed results in Table \ref{tab:Laplacianresult4} weighted by the number of slices.}
\label{fig:slice}
\end{figure}

\subsection{Matrices from electronic structure  calculations}\label{subsec:parsec}
In  this section we  compute eigenpairs  of five  Hamiltonian matrices
from electronic  structure calculations. These matrices are part of the
PARSEC set of the University of
Florida Sparse Matrix Collection and come from real space
discretizations of  Hamiltonians.  The matrix size $n$, the number
of nonzeros  $nnz$, the  range of the  spectrum $[a,\ b]$,  the target
interval \intv as well as  the exact number of eigenvalues inside this
interval  are reported  in  Table \ref{tab:Hamilton}.   Each \intv  is
selected to contain  the interval $[0.5n_0,\ 1.5n_0]$, where
$n_0$   corresponds   to  the   Fermi   level   of  each   Hamiltonian
\cite{Filtlan-paper}.
\begin{table}[tbh]
\caption{
Hamiltonian matrices from the 
PARSEC set in the University of
Florida Sparse Matrix Collection.}%
\label{tab:Hamilton}
\centering\tabcolsep6pt
\begin{tabular}
[c]{c|c|c|c|c|c}\hline
Matrix & $n$&$nnz$ & $[a,b]$ & $[\xi,\eta]$& \#eig\\\hline
$\mathrm{Ge_{87}H_{76}}$ & $112,985$ & $7,892,195$ & $[-1.214, 32.764]$ & $[-0.64, -0.0053]$ & 212 \\
$\mathrm{Ge_{99}H_{100}}$ & $112,985$ & $8,451,295$ & $[-1.226,32.703]$& $[-0.65, -0.0096]$&250 \\
$\mathrm{Si_{41}Ge_{41}H_{72}}$& $185,639$ & $15,011,265$ & $[-1.121,49.818]$&  $[-0.64,-0.0028]$& 218 \\
$\mathrm{Si_{87}H_{76}}$ &$240,369$ & $10,661,631$ & $[-1.196,43.074]$& $[-0.66,-0.0300]$& 213 \\
$\mathrm{Ga_{41}As_{41}H_{72}}$ & $268,096$ & $18,488,476$ & $[-1.250,1300.9]$&$[-0.64,-0.0000]$& 201\\\hline
\end{tabular}
\end{table}

The Hamiltonians under consideration have roughly $70$ nonzero entries per row 
and are much denser compared to the 3D Laplacian example. In fact, the number of nonzeros in their LU factors is around $n^2/5$ rendering
the classical Shift-and-invert Lanczos algorithm very ineffective. We then ran the experiments with \texttt{FiltLanTR} following the same settings as in the Laplacian example and reported the computational results in Table \ref{tab:Parsecresult2}. 

\begin{table}[tbh]
\caption{Numerical results for matrices in the PARSEC set with $\phi = 0.8$.}%
\label{tab:Parsecresult2}
\centering\tabcolsep5.0pt
\begin{tabular}
[c]{c|c|c|c|c|c|c|c}\hline
\multirow{2}{*}{Matrix}  & \multirow{2}{*}{deg} & \multirow{2}{*}{iter} & \multirow{2}{*}{matvecs}& \multicolumn{2}{|c|}{CPU Time (sec.)}  & \multicolumn{2}{|c}{Residual}\\\cline{5-8}
 &  & &  & matvec & Total  & Max & Avg  \\\hline
$\mathrm{Ge_{87}H_{76}}$ & 26 & 1431  &37482  & 282.70 & 395.91 &   $9.40\!\times\! 10^{-9}$ & $2.55\!\times\!10^{-10}$ \\
$\mathrm{Ge_{99}H_{100}}$ & 26 & 1615  & 42330  & 338.76 &  488.91 & $9.10\!\times\! 10^{-9}$ & $2.26\!\times\!10^{-10}$ \\
$\mathrm{Si_{41}Ge_{41}H_{72}}$ & 35 & 1420  & 50032  & 702.32 & 891.98 & $3.80\!\times\! 10^{-9}$ & $8.38\!\times\!10^{-11}$\\
$\mathrm{Si_{87}H_{76}}$ & 30 & 1427 & 43095  & 468.48 & 699.90 & $7.60\!\times\! 10^{-9}$ & $3.29\!\times\!10^{-10}$ \\
$\mathrm{Ga_{41}As_{41}H_{72}}$ & 202 & 2334 & 471669  & 8179.51 & 9190.46 & $4.20\!\times\! 10^{-12}$ & $4.33\!\times\!10^{-13}$ \\\hline
\end{tabular}
\end{table}

Since these Hamiltonians are  quite dense, the computational cost from
matrix-vector  products accounts for  a large  portion of  the overall
cost even when  low degree filters are in use.  This is illustrated in
the first  four tests. In addition, \texttt{FiltLanTR}  selects a much
higher    degree    polynomial     filter    for    the    Hamiltonian
$\mathrm{Ga_{41}As_{41}H_{72}}$  as compared  to the  others.  This is
because the  target interval $[-0.64,\ 0.0]$ is  quite narrow relative
to the range  of its spectrum $[-1.2502,\ 1300.93]$.  For this kind of
problems, it is  better to reduce the polynomial  degree by increasing
the  value of  $\phi$ slightly.  For example,  as shown  in Table
\ref{tab:Parsecresult3},   when  $\phi$   increases   to  $0.9$,   the
matrix-vector  product  time  drops  by  $30.45\%$  and  the  total
iteration       time      drops      by       $18.04\%$       for
$\mathrm{Ga_{41}As_{41}H_{72}}$.  However,  the  performance  for  the
other four  Hamiltonians deteriorates significantly.  Therefore, it is
only beneficial  to lower the degree  of the filter  for problems with
expensive matrix-vector  products and a high  degree polynomial filter
constructed by \texttt{FiltLanTR}.

\begin{table}[tbh]
\caption{Numerical results for matrices in the PARSEC set with $\phi = 0.9$.}%
\label{tab:Parsecresult3}
\centering\tabcolsep5.0pt
\begin{tabular}
[c]{c|c|c|c|c|c|c|c}\hline
\multirow{2}{*}{Matrix}  & \multirow{2}{*}{deg} & \multirow{2}{*}{iter} & \multirow{2}{*}{matvecs}& \multicolumn{2}{|c|}{CPU Time (sec.)}  & \multicolumn{2}{|c}{Residual}\\\cline{5-8}
 &  & &  & matvec & Total  & Max & Avg  \\ \hline
$\mathrm{Ge_{87}H_{76}}$ & 18 &1981  &36027  & 275.87 &462.96 & $9.20\!\times\! 10^{-9}$ & $5.15\!\times\!10^{-10}$ \\ 
$\mathrm{Ge_{99}H_{100}}$ & 19 & 2260& 43346  &351.68 & 603.72 & $7.10\!\times\! 10^{-9}$ & $2.83\!\times\!10^{-10}$\\ 
$\mathrm{Si_{41}Ge_{41}H_{72}}$ &24  &   1976 & 47818  & 679.19 & 971.19 & $8.10\!\times\!10^{-9}$ & $3.17\!\times\! 10^{-10}$\\ 
$\mathrm{Si_{87}H_{76}}$ &21 &3258 &68865 & 760.71 &1297.41 & $9.90\!\times \!10^{-9}$ & $7.78\!\times\! 10^{-10}$\\ 

$\mathrm{Ga_{41}As_{41}H_{72}}$ & 140&2334  &326961 &5688.46  &6703.93& $1.20\!\times \!10^{-10}$ & $2.03\!\times \!10^{-12}$\\ \hline
\end{tabular}
\end{table}

\subsection{Parallel results with OpenMP}\label{subsec:openmp}
As has been demonstrated on the 3D Laplacian example in Section \ref{subsec:lap3d},
\texttt{FiltLanTR} can be naturally utilized  
within the divide and conquer strategy for computing a large number of eigenpairs.
The ability to target different parts of spectrum independently opens an additional level of concurrency, which can be efficiently exploited 
by modern parallel machines.  

In this section, we demonstrate the scalability potential of this divide and 
conquer  approach by simply adding a naive OpenMP parallelization across 
different spectral intervals.
In each interval, \texttt{FiltLanTR} 
is invoked, so that different parts of spectrum are handled independently by concurrent 
threads ran on different computational cores.      

As a test problem, we computed $1002$ lowest eigenpairs of 
another matrix from the PARSEC set discussed earlier. 
This is the matrix  $\mathrm{SiO}$ which has a size 
$n = 33,401$ and $nnz = 1,317,655$ nonzeros.
\begin{figure}[htb]
\centering 
\includegraphics[width=0.56\textwidth]{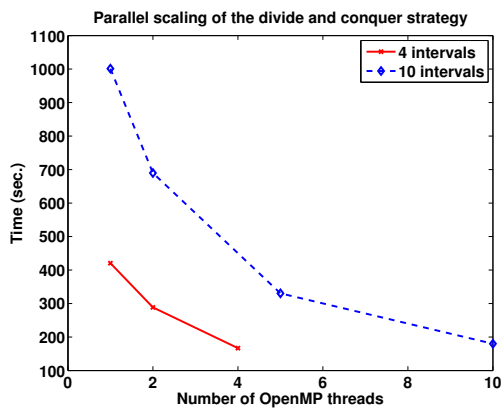}
\caption{An OpenMP parallelization across 4 and 10 spectral intervals of a divide and conquer 
approach for SiO matrix; $n = 33,401$ and $nnz = 1,317,655$.}
\label{fig:sio_omp}
\end{figure}

Figure~\ref{fig:sio_omp} demonstrates parallel scaling of such a divide and conquer approach for computing 
$1002$ lowest eigenpairs using 4 and 10 spectral intervals. It can be seen that solution time is 
decreased by roughly the same factor by which the number of OpenMP threads (or, equivalently, computational
cores) is increased, suggesting a near-optimal scaling of the entire strategy. Note that our test only
illustrates parallelism achieved by independently treating different spectral regions.  
The second level of parallelism available in the re-orthogonalization 
process and matrix-vector products  as shown in Figure \ref{fig:evsl} will be addressed in our future work. 


As has been observed in Section \ref{subsec:lap3d}, the choice of the number of intervals has a strong impact on 
the overall performance of \texttt{FiltLanTR}, with the recommended number of eigenvalues per slice around $200$ to $300$. The test in Figure~\ref{fig:sio_omp} reaffirms this finding. It shows that faster solution times 
are obtained if $4$ intervals, with approximately $250$ eigenvalues per slice, are used as opposed to
using $10$ intervals with roughly $100$ eigenvalues in each sub-interval.

\section{Conclusion}\label{sec:con}
One of the critical components of a spectrum slicing algorithm 
designed to compute many eigenpairs of a large sparse symmetric
matrix is a procedure that can be used to compute eigenvalues contained
in a sub-interval $[\xi,\ \eta]$. We developed an efficient way to accomplish
this task. Our algorithm is based on combining polynomial filtering 
with a Thick-Restart Lanczos algorithm. Thick restarting is employed
to limit the cost of orthogonalization. 
The polynomial filter that maps eigenvalues within $[\xi,\ \eta]$
to eigenvalues with the largest magnitude of the transformed problem,
is obtained from a least-squares approximation to an
 appropriately centered Dirac-$\delta$ distribution. Numerical experiments
show that
such a construction yields effective polynomial filters which along with a
Thick-Restart Lanczos procedure enable desired eigenpairs 
to be computed efficiently.

\section*{Acknowledgments}
YS had stimulating discussions with Jared Aurentz which lead to an improved
scheme for locating the correct placement of the Delta Dirac function in the interval 
$[a, \ b]$. 

\bibliographystyle{siam}
\bibliography{local.bib}

\clearpage 

\section*{Appendix}
\renewcommand{\thesubsection}{\Alph{subsection}}
\subsection{Balancing the filter via an eigenvalue problem }\label{appdix:1}
Another way to balance the filter is via the solution of an
 eigenvalues problem involving a  $k\times k$ Hessenberg matrix.
Let $t_j = \cos(j\theta_\gamma)= T_j(\gamma)$, for $j=0,\cdots, k-1$. Then the 3-term recurrence of Chebyshev polynomials yields
\eq{eq:Neq8}
2\gamma \times t_j = \left\{ \begin{array}{lll}
2 t_{j+1} & \mbox{if} & j=0 \\
t_{j+1}+ t_{j-1} & \mbox{if} & j>0 \end{array} \right. .
\en
For $\gamma$ that is a solution of Equation (\ref{eq:Neq2}) we have:
\eq{eq:Neq3}
t_k= -\sum_{j=0}^{k-1}  \beta_j  t_j ,
\qquad \hbox{with} \qquad 
\beta_j = 
\frac{g_j^k [\cos(j\theta_\xi)-   \cos(j\theta_\eta)]}
{g_k^k [\cos(k\theta_\xi)-   \cos(k\theta_\eta)]}
.
\en 
Thus, when $j=k-1$ in (\ref{eq:Neq8}) the following equation holds
\eq{eq:Neq5}
2\gamma \times t_{k-1} = -\sum_{j=0}^{k-1}  \beta_j  t_j +t_{k-2}.
\en 

As a result, denoting by $\mathbf{t}$ the vector with components $t_j, j=0,\ldots,k-1$ relations (\ref{eq:Neq8}) and
(\ref{eq:Neq5}) can be expressed in the form: 
\eq{eq:eig}
 H \mathbf{t} = 2\gamma \mathbf{t},
\en
where
\[
H = 
\left[
\begin{array}{cccccc}
0  &  2    &           &         &          &       \\
1  &  0    & 1        &         &          &       \\
    &   1  &   0       &    1   &          &       \\
    &       &   \ddots  &  \ddots & \ddots   &       \\
    &       &       &   1      &  0       &    1    \\
- \beta_0   &  -\beta_1     &  \ldots     &  \ldots     &   1-\beta_{k-2}      & -\beta_{k-1}       
\end{array}
\right].
\]
This shows  that $\gamma$  is an eigenvalue  of the  Hessenberg matrix
$H/2$. We  can take  the eigenvalue  of $H/2$ that  is closest  to the
value $\cos^{-1}(\theta_c)$ among  those belong to $[\xi,  \ \eta]$ as
the center. 

\subsection{Additional numerical results for the 3D discrete Laplacian}
\label{sec:differentslice}

Results for a $3$D Laplacian with $2$, $5$, $15$ and $20$ slices are tabulated in  Table~\ref{tab:Laplacianresult4}.

\begin{table}[tbh]
\caption{Numerical results for a 3D  Laplacian  with $\phi = 0.8$ and 
different numbers of slices.}%
\label{tab:Laplacianresult4}
\centering\tabcolsep4.5pt
\begin{tabular}
[c]{c|c|c|c|c|c|c|c}\hline
\multirow{2}{*}{\#slices}  & \multirow{2}{*}{deg} & \multirow{2}{*}{iter} & \multirow{2}{*}{matvecs}& \multicolumn{2}{|c|}{CPU Time(sec.)}  & \multicolumn{2}{|c}{residual}\\\cline{5-8}
 & & & & matvecs & total & max & avg \\\cline{1-8}
\multirow{2}{*}{2} & 28& 8263 & 233225  & 486.12 & 10857.38 & $3.70\!\times \!10^{-9}$&$2.99\!\times\! 10^{-11}$\\
& 41 & 10306 & 424439 & 877.36& 12777.31& $7.50\!\times\! 10^{-9}$&$1.05\!\times\! 10^{-10}$\\\hline
 &62& 3376 & 210081  & 435.31 &1657.48& $9.90\!\times \!10^{-9}$&$1.90\!\times\! 10^{-10}$\\
& 76 & 4238 & 322975 & 667.70& 2212.86& $9.10\!\times\! 10^{-9}$&$5.98\!\times\! 10^{-11}$\\
5& 88 & 3396 & 299665 & 616.87& 1865.45& $7.50\!\times\! 10^{-9}$&$8.32\!\times\! 10^{-11}$\\
& 100 & 4235 & 424452 & 872.03& 2401.22& $9.60\!\times\! 10^{-9}$&$1.74\!\times\! 10^{-10}$\\
& 114 & 4214 & 481350 & 988.00& 2497.86& $5.70\!\times\! 10^{-9}$&$5.14\!\times\! 10^{-11}$\\\hline
 &170& 1251 & 213074  & 440.03 &592.20& $7.30\!\times \!10^{-9}$&$1.23\!\times\! 10^{-10}$\\
&185 & 1239 & 229654 & 474.06&626.68& $7.50\!\times\! 10^{-11}$&$7.43\!\times\! 10^{-13}$\\
 & 198 & 1248 & 247529 & 511.41& 660.84& $4.30\!\times\! 10^{-12}$&$1.42\!\times\! 10^{-13}$\\
& 215 & 1540 & 331780 & 684.47& 872.09& $4.60\!\times\! 10^{-11}$&$2.99\!\times\! 10^{-13}$\\
& 229 & 1248 & 286260 & 590.86& 743.51& $5.20\!\times\! 10^{-9}$&$8.91\!\times\! 10^{-11}$\\
 &238& 1247 & 297264  & 612.96 &765.26& $3.80\!\times \!10^{-9}$&$4.40\!\times\! 10^{-11}$\\
& 250 & 1573 & 394002 & 812.99&1009.45& $4.70\!\times\! 10^{-9}$&$7.35\!\times\! 10^{-11}$\\
15& 267 & 1246 &333197& 686.88& 840.87& $2.70\!\times\! 10^{-9}$&$2.39\!\times\! 10^{-11}$\\
& 280 & 1250 & 350490 & 722.35& 869.20& $5.40\!\times\! 10^{-11}$&$1.42\!\times\! 10^{-12}$\\
& 294 & 1573 & 463311 & 955.36& 1154.58& $5.40\!\times\! 10^{-11}$&$1.42\!\times\! 10^{-12}$\\
&304&1250 & 380523  & 784.04 &933.47& $1.60\!\times \!10^{-9}$&$2.82\!\times\! 10^{-11}$\\
& 313 & 1573 & 493227 & 1016.69& 1213.15& $5.80\!\times\! 10^{-9}$&$4.99\!\times\! 10^{-11}$\\
& 323 & 1251 &  404615 & 834.10& 983.25& $2.00\!\times\! 10^{-9}$&$2.61\!\times\! 10^{-11}$\\
& 333 & 1570 & 523746 & 1078.74& 1279.77& $8.90\!\times\! 10^{-9}$&$1.40\!\times\! 10^{-10}$\\
& 392& 1219 & 478463 & 985.61& 1128.78& $8.10\!\times\! 10^{-10}$&$1.59\!\times\! 10^{-11}$\\ \hline
&229& 952 & 218422  & 456.02 &545.01& $8.8
0\!\times \!10^{-9}$&$1.69\!\times\! 10^{-10}$\\ 
&243 & 954 & 232229 & 485.25&570.54& $1.70\!\times\! 10^{-10}$&$4.88\!\times\! 10^{-12}$\\
 & 253 & 1188 & 301266 & 628.85&  741.02& $9.10\!\times\! 10^{-9}$&$1.54\!\times\! 10^{-10}$\\
& 268 & 1009 & 270837 & 565.15& 659.90& $4.20\!\times\! 10^{-9}$&$4.26\!\times\! 10^{-11}$\\
& 283 & 1186 & 336397 & 702.43& 813.16& $2.30\!\times\! 10^{-9}$&$1.37\!\times\! 10^{-11}$\\
&294& 1218 &  358858 & 748.57&861.85& $1.50\!\times \!10^{-9}$&$2.99\!\times\! 10^{-11}$\\
&310 & 1016 & 315444 & 658.70&758.21& $1.70\!\times\! 10^{-9}$&$4.19\!\times\! 10^{-11}$\\
& 321 & 949 & 305134 & 636.30& 724.47& $4.10\!\times\! 10^{-10}$&$6.14\!\times\! 10^{-12}$\\
& 333 & 1243 & 414780 & 865.01& 986.00& $6.10\!\times\! 10^{-9}$&$7.71\!\times\! 10^{-11}$\\
20& 345 & 984 & 339983 & 709.70& 800.04& $9.80\!\times\! 10^{-9}$&$1.33\!\times\! 10^{-10}$\\
  &357& 944 & 337564  & 703.77 &792.95& $9.80\!\times \!10^{-9}$&$1.33\!\times\! 10^{-10}$\\
& 369 & 983 & 363258 & 757.46&847.84& $9.80\!\times\! 10^{-9}$&$1.33\!\times\! 10^{-10}$\\
& 382 & 1207 &462063& 961.42& 1081.01& $2.40\!\times\! 10^{-9}$&$1.69\!\times\! 10^{-11}$\\
& 395 & 982 & 388442 & 807.92& 897.06& $6.50\!\times\! 10^{-9}$&$1.04\!\times\! 10^{-10}$\\
& 408 & 1174 & 480024 & 998.82&1110.60& $5.20\!\times\! 10^{-10}$&$1.22\!\times\! 10^{-11}$\\
&422&1014 & 428491  & 891.10 &987.13& $5.90\!\times \!10^{-10}$&$1.11\!\times\! 10^{-11}$\\
& 426 & 1215 &518630 & 1079.17& 1193.59& $6.60\!\times\! 10^{-9}$&$9.95\!\times\! 10^{-11}$\\
& 440 & 1246 &  549306 & 1142.45& 1262.09& $6.60\!\times\! 10^{-9}$&$6.67\!\times\! 10^{-11}$\\
& 455 & 1215 & 553931 & 1152.21& 1268.52& $3.20\!\times\! 10^{-9}$&$3.50\!\times\! 10^{-11}$\\
& 603& 955 & 576607 & 1198.94 & 1281.51& $2.20\!\times\! 10^{-9}$&$5.20\!\times\! 10^{-11}$\\\hline
\end{tabular}
\end{table}

%



\end{document}